\documentclass{amsart}
\usepackage{amsmath}
\usepackage{amsfonts}
\usepackage{amssymb,amscd,amsthm,amsbsy}
\usepackage{geometry}
\usepackage{mathrsfs}
\usepackage[bookmarksnumbered, colorlinks, linktocpage, plainpages]{hyperref}
\usepackage{nameref}
\usepackage{wasysym,amssymb,eufrak,indentfirst,color}
\usepackage{mathtools}
\usepackage{appendix}
\usepackage{extarrows}
\usepackage{setspace}

\usepackage{cases}
\usepackage{graphicx}
\usepackage{ragged2e}

\numberwithin{equation}{section}

\newcommand{\almost}{para-}

\newcommand{\Hom}{\text{Hom}}
\newcommand{\End}{\text{End}}

\newcommand{\bfs}{Without loss of generality we can assume}

\newcommand{\spf}{\mathbb{F}}
\newcommand{\spc}{\mathbb{C}}
\newcommand{\spr}{\mathbb{R}}
\newcommand{\spo}{\mathbb{O}}

\newcommand\huaa[1]{\mathscr{A}(#1)}
\newcommand{\re}{\text{Re}\,}%

\newcommand\fx[2]{\left<#1,#2\right>}%

\newcommand\fsh[1]{\left|\left|#1\right|\right|}%

\def\S{\mathbb{S}}
\def\O{\mathbb{O}}
\def\R{\mathbb{R}}

\def\abs#1{\left|#1\right|}

\newtheorem{mydef}{Definition}[section]
\newtheorem{rem}[mydef]{Remark}
\newtheorem{eg}[mydef]{Example}

\newtheorem{lemma}[mydef]{Lemma}
\newtheorem{thm}[mydef]{Theorem}

\newtheorem{step }[stp]{Step }

\begin{document}
	\title[Octonionic  isometry]{Octonionic isometric isomorphisms and partial isometry}	
	\author{Qinghai Huo}
	
	\email[Q.~Huo]{hqh86@mail.ustc.edu.cn}
	\address{School of Mathematics, Hefei University of Technology, Hefei 230601, China}
	
	\author{Guangbin Ren}
	\email[G.~Ren]{rengb@ustc.edu.cn}
	\address{School of Mathematics Sciences, University of Science and Technology of China, Hefei 230026, China}
	
	\author{Zhenghua Xu}
	\email[Z.~Xu]{zhxu@hfut.edu.cn}
	\address{School of Mathematics, Hefei University of Technology, Hefei 230601, China}
	\date{}
	\keywords{Octonionic Hilbert space;
		para-linear operator;  weak associative orthonormal basis; isometric isomorphism.}
	\subjclass[2020]{Primary: 	17A35;\  {Secondary:  46S10}}
	
	\thanks{This work was supported by the National Natural Science Foundation of China (Nos.12571090,12301097), the Fundamental Research Funds for the Central Universities (Nos.JZ2025HGTB0171, JZ2025HGTG0250),  Anhui Provincial Natural Science Foundation (No.
		2308085MA04), and  China Scholarship Council (Nos. 202506690055, 202506690052). }

	%
	
	\begin{abstract} 
		
		Very recently, two new notions of para-linear mappings and weak associative orthonormal bases were introduced in octonionic functional analysis, which have been proved to be powerful in formulating the basic theory, such as the Riesz representation theorem and the Parseval theorem. In this article, we continue exploring more properties of these two concepts and initiate the study of octonionic para-linear isometric operators. Surprisingly, it is proven that the condition of the para-linear operator on a Hilbert octonionic bimodule being an isometric isomorphism is equivalent to  it  mapping any associative orthonormal basis to a weak associative orthonormal basis, which implies also that an octonionic matrix is an isometry if and only if the system of its row vectors is a weak associative orthonormal basis. Furthermore, we introduce the concept of para-linear partial isometric operators and establish the aforementioned analogue in this new setting. Based on these facts, we can provide naturally a new viewpoint of James questions by modifying the definition of  {octonionic Stiefel space}.
	\end{abstract}
	
	\maketitle
	
	
	\section{Introduction}
	
	%

	The theory of octonionic Hilbert spaces is
	initiated    by 	Goldstine and Horwitz \cite{goldstine1964hilbert}   in 1964. They gave a definition of  a linear space over octonions. In additional to the usual postulates for an inner product, they yet  assumed that 
	\begin{eqnarray}
		\fx{px}{x}&=&p\fx{x}{x},\label{eq:1}\\
		\re\fx{px}{y}&=&\re (p\fx{x}{y})
	\end{eqnarray}
	for all vectors $x,y$ and octonions $p$. In a second paper \cite{goldstine1964hilbert2},  Goldstine and Horwitz  investigated the concept of a Hilbert space over finite dimensional associative algebra. The theory of their proceeding paper is shown to be a special case of the theory of this paper. Jamison in 1970 summarized the development of functional analysis over quaternions and octonions \cite{James1970QOfct}. The octonionic linear functionals are the main subject and only a  few results are obtained for octonionic setting. 
	Some developments on operator theory and C*-algebra theory  are obtained at the beginning of the 21st century by	Ludkovsky and Spr\"ossig  \cite{Ludkowski2011aacaspec,ludkovsky2007Spectral}. 	Compared with the  rich quaternionic  Hilbert space theory
	\cite{soffer1983quaternion,viswanath1971normal,Colombo2008funcalculus,Alpay2016JMP,Colombo2011ADVqevol,Colombo2019normal,ghiloni2013slicefct},
	the  octonionic functional theory  still has many blanks and difficulties.
	
	Recently,  two new phenomena  have emerged that are completely different from  the classical associative settings.
	One is that   the place of linear mappings   is replaced by para-linear mappings.
	The other is that the place of orthonormal bases should be replaced by
	weak associative orthonormal bases. It was found  that the  axiom \eqref{eq:1}  in  the definition of $\O$-Hilbert space given by Goldstine and Horwitz in  \cite{goldstine1964hilbert} is actually  not independent
	\cite{huoqinghai2022Riesz}.
	This leads to    an abstract definition of mappings arising
	from the octonionic inner products, which are called
	octonionic para-linear mappings  \cite{huoqinghai2022Riesz}.
	The  notion of para-linearity turns out to be   the non-associative counterpart of linearity. The octonionic Riesz representation theorem   holds  for para-linear functionals \cite{huoqinghai2022Riesz} naturally.
	Based on the notion of para-linearity, the structures of Hilbert left $\O$-modules and  $\O$-bimodules are clear \cite{huoqinghai2021tensor}. The notion of para-linearity also has some applications in octonionic theory, such as \cite{Colombo2023OctonionicMA,Kraußhar2022discoct,Krauhar2023WeylCP}.
	
	The notion of weak associative orthonormal bases comes from the investigation of the Parseval theorem. 	At first, Goldstine and Horwitz   \cite{goldstine1964hilbert2} observed that the  Parseval theorem can not hold for general orthonormal basis. What is more,   it turns out that the definition of the dimension of the octonionic Hilbert space  in the sense of the cardinality of  orthonormal basis  is meaningless,
	%
	since   the 	cardinalities may vary for different  orthonormal bases.
	For example, as observed in \cite{huoqinghai2021tensor}, the $\O$-Hilbert space $\O^2$ admits two orthonormal  bases $$(0,1),\quad(1,0)$$ and $$\dfrac{1}{\sqrt{2}}\Big((e_1,e_2),\quad (e_4,e_7),\quad (e_6,e_5),\quad (1,e_3)\Big).$$
	One has cardinality $2$ and the other has cardinality $4$.
	Recently, we introduce a notion of weak associative orthonormal basis, for which we establish   the  Parseval theorem  \cite{huoqinghai2021tensor}.

	In this article, we aim to characterize the para-linear isometric isomorphisms and partial isometries in Hilbert octonionic bimodule. 
	It is known that the   groups of orthogonal 	matrices $O_{\spf}(n)$  over $\spf$ for  $\spf=\R,\spc, \mathbb H$, i.e.,
	$$O(n), U(n),Sp(n)$$ respectively,   play   important roles in  the theory of Lie groups, homogeneous spaces, holonomy groups, and    Stiefel
	manifolds. 
	It is a natural question to ask what is the
	octonionic  orthogonal ``group''. As  a special finite dimensional case, we shall give  the characterization of octonionic  isometric matrix, which highly  relies on the notion of weak associative orthonormal bases.

	We introduce the notion of para-linearity now.
	An \textbf{$\O$-para-linear} function in a Hilbert $\O$-module $H$
	is defined to be a real linear map $f: H\to\O$ subject to the condition
	\begin{eqnarray}\label{eq:def-para-104}\re \big(f(px)-pf(x) \big)=0\end{eqnarray} for all $p\in \O$ and $x\in M$ (\cite{huoqinghai2022Riesz}).
	Similarly,   if  $\O$ is replaced by other division algebras $\spf$, then one can define
	the   notion of  $\spf$-\almost linear functions. It is shown that the
	$\spf$-\almost linearity becomes   $\spf$-linearity	when  $\spf$ is  associative.
	By introducing the real part structure of an $\O$-bimodule, we extend the notion of para-linear functions to the notion of para-linear maps with the range $\O$ replaced by arbitrary $\O$-bimodule (\cite{huoqinghai2020nonass}). In particular, every left para-linear operator on $\O^n$ is actually a map induced by an octonionic matrix acting on row vectors from the right side.

	A subset $S=\{x_{\alpha}\}_{\alpha\in \Lambda}$ of $H$ is said to be an \textbf{orthonormal set} if $$\fx{x_{\alpha}}{x_{\beta}}=\delta_{\alpha \beta}.$$
	Further if $S\subseteq \re {H}$, where $\re H$ is the real part of $H$, then  $S$ is said to be an \textbf{associative orthonormal   set}.	
	An orthonormal system $S=\{x_{\alpha}\}_{\alpha\in \Lambda}$ is said to be an \textbf{orthonormal basis} if there does not exist other orthonormal system $S'$ such that $S\subsetneqq S'$.
	Further if $S\subseteq \re {H}$, then  $S$ is said to be an \textbf{ associative orthonormal   basis}.
	An orthonormal basis (set) $S=\{x_{\alpha}\}_{\alpha\in \Lambda}$ is said to be a \textbf{weak associative orthonormal basis (set)} if  for all $\alpha,\beta\in \Lambda$,
	$$A_p(x_\alpha,x_\beta):=\fx{px_\alpha}{x_\beta}-p\fx{x_\alpha}{x_\beta}=0.$$
	The weak associative orthonormal basis admits the following features \cite{huoqinghai2021tensor}:
	
	$\bullet$ The octonionic  Parseval theorem holds for an orthonormal basis if and only if the basis  is a weak associative orthonormal basis.

	$\bullet$ In any octonionic Hilbert space, there always exists a  weak associative orthonormal basis.

	$\bullet$ For any two weak associative orthonormal bases, they share the same  cardinality.




	The para-linear isometric isometry can be described in terms of the weak  associative orthonormal basis.
	
	\begin{thm}
		Let  $T:H\to H$ be a para-linear operator. Then the following are equivalent.
		\begin{enumerate}
			\item $T$ is an  isometric isomorphism.
			\item For any associative orthonormal basis $\{\epsilon_\alpha\}_{\alpha\in \Lambda}$ of $H$,
			$\{T\epsilon_\alpha\}_{\alpha\in \Lambda}$  is a weak associative orthonormal basis of $H$.
			\item   For any associative orthonormal basis $\epsilon=\{\epsilon_\alpha\}_{\alpha\in \Lambda}$, there exists a unique weak associative  orthonormal basis $\xi=\{\xi_\alpha\}_{\alpha\in \Lambda}$ such that  
			$$Tx=\sum_{\alpha\in \Lambda} \fx{{x}}{{\xi_\alpha}}\epsilon_\alpha.$$
			\item There exists an associative orthonormal basis $\epsilon=\{\epsilon_\alpha\}_{\alpha\in \Lambda}$, such that 	$\{T\epsilon_\alpha\}_{\alpha\in \Lambda}$  is a weak associative orthonormal basis of $H$.
			
			\item  $T^*\circ T=T\circ T^*=\text{Id}$. Here the notation ``$\circ$'' stands for the ordinary composition.
		\end{enumerate}
		Moreover, if $T$ is an $\O$-para-linear isometric isomorphism, then   
		\begin{eqnarray}\label{eq:T*T=I}
			T^*\circledcirc T=T\circledcirc T^*=\text{Id}.
		\end{eqnarray} Here the notation ``$\circledcirc$'' stands for the regular composition. The regular composition is a new composition of para-linear operators under which the para-linearity can be preserved. 
		
	\end{thm}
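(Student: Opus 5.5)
I will argue through the cycle $(1)\Rightarrow(2)\Rightarrow(4)\Rightarrow(1)$, then attach $(3)$ and $(5)$ to it, and finally treat the regular composition identity \eqref{eq:T*T=I}. Throughout I work in the Hilbert $\O$-bimodule $H$. Here I would use the structure theory of \cite{huoqinghai2021tensor,huoqinghai2020nonass}: $H=\O\otimes_{\R}\re H$, and a para-linear operator $T$ is determined by its values on $\re H$ through real linearity and the para-linearity condition. I also use the following standard identities.

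\begin{enumerate}
\item For $x,y\in\re H$ one has $\fx{px}{qy}=p\fx{x}{y}\bar q$ and $A_p(x,y)=0$.
\item The Parseval identity $\fsh{x}^2=\sum_\alpha|\fx{x}{\xi_\alpha}|^2$ and the expansion $x=\sum_\alpha \fx{x}{\xi_\alpha}\xi_\alpha$ hold precisely for weak associative orthonormal bases $\{\xi_\alpha\}$.
\end{enumerate}

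\textbf{$(1)\Rightarrow(2)$.} Since $T$ preserves norms, polarization of the real part gives $\re\fx{Tx}{Ty}=\re\fx{x}{y}$. The first step is to upgrade this to $\fx{T(p\epsilon_\alpha)}{T\epsilon_\beta}=\fx{p\epsilon_\alpha}{\epsilon_\beta}=p\delta_{\alpha\beta}$. I would do this by replacing $y$ with $qy$ for imaginary units $q$ and using the para-linearity identity $\re(T(px)-pT(x))=0$ to move octonions across.

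Para-linearity should give $T(p\epsilon_\alpha)=pT\epsilon_\alpha$ when $\epsilon_\alpha$ is real. I would check this via the bimodule real-part structure, using the fact that left para-linear maps on $\O\otimes\re H$ act as $p\otimes r\mapsto p\,T(r)$. Granting that, we get both $\fx{T\epsilon_\alpha}{T\epsilon_\beta}=\delta_{\alpha\beta}$ and $\fx{pT\epsilon_\alpha}{T\epsilon_\beta}=p\delta_{\alpha\beta}$, which is exactly weak associativity.

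Maximality of $\{T\epsilon_\alpha\}$ comes from surjectivity of $T$. Any $y=Tx$ satisfies the Parseval expansion $x=\sum\fx{x}{\epsilon_\alpha}\epsilon_\alpha$, so $Tx=\sum \fx{x}{\epsilon_\alpha}T\epsilon_\alpha$ lies in the closed span. Hence nothing orthogonal to all the $T\epsilon_\alpha$ can be nonzero.

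\textbf{$(2)\Rightarrow(4)$.} This is trivial, since $H$ has an associative orthonormal basis.

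\textbf{$(4)\Rightarrow(1)$.} Write $x=\sum p_\alpha\epsilon_\alpha$ with $p_\alpha=\fx{x}{\epsilon_\alpha}$. Then $Tx=\sum p_\alpha T\epsilon_\alpha$, and the Parseval theorem for the weak associative orthonormal basis $\{T\epsilon_\alpha\}$ yields $\fsh{Tx}^2=\sum|p_\alpha|^2=\fsh{x}^2$. Surjectivity holds because every $y$ equals $\sum\fx{y}{T\epsilon_\alpha}T\epsilon_\alpha=T\big(\sum\fx{y}{T\epsilon_\alpha}\epsilon_\alpha\big)$.

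\textbf{$(1)\Leftrightarrow(3)$ and $(1)\Leftrightarrow(5)$.} I would obtain $(3)$ by applying the equivalences just proved to the adjoint. By the Riesz representation theorem for para-linear functionals, $x\mapsto\fx{Tx}{\epsilon_\alpha}$ equals $\fx{x}{\xi_\alpha}$ for a unique $\xi_\alpha=T^*\epsilon_\alpha$. The expansion $Tx=\sum\fx{Tx}{\epsilon_\alpha}\epsilon_\alpha$ then gives the formula in $(3)$. It remains to show that $T^*$ is also an isometric isomorphism, so that $\{T^*\epsilon_\alpha\}$ is weak associative by $(1)\Rightarrow(2)$. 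Conversely, given $(3)$, Parseval for $\xi$ gives $\fsh{Tx}=\fsh{x}$, and surjectivity follows by symmetry.

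For $(5)$: from $\re\fx{Tx}{Ty}=\re\fx{x}{y}$ we get $\re\fx{T^*Tx-x}{y}=0$ for all $y$, so $T^*T=\mathrm{Id}$. Bijectivity then forces $TT^*=\mathrm{Id}$. Conversely, $(5)$ gives $\fsh{Tx}^2=\re\fx{T^*Tx}{x}=\fsh{x}^2$, provided the adjoint is defined via real parts, which I would verify matches the paper's adjoint.

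\textbf{Regular composition.} For \eqref{eq:T*T=I}, I would use the definition of $\circledcirc$ from the earlier work as the para-linear operator agreeing with the ordinary composition on $\re H$ and extended para-linearly. Since $T^*T=\mathrm{Id}$ on $\re H$, the regular composition must be $\mathrm{Id}$.

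\textbf{Main obstacle.} I expect the hardest step to be the upgrade in $(1)\Rightarrow(2)$ from real-part preservation to full octonion-valued inner product preservation on the basis. That is, showing $\fx{pT\epsilon_\alpha}{T\epsilon_\beta}=p\delta_{\alpha\beta}$ despite the fact that $T\epsilon_\alpha$ need not lie in $\re H$. My first attempt will be to expand $\fx{pT\epsilon_\alpha}{T\epsilon_\beta}$ componentwise via $\re(\overline{q}\,\cdot)$ for the basis units $q=e_i$, reducing each component to a real inner product that $T$ preserves.
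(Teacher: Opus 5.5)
Your proposal is correct. For $(1)\Rightarrow(2)\Rightarrow(4)\Rightarrow(1)$, for $(1)\Leftrightarrow(5)$ and for the regular-composition identity it is the paper's argument; the paper's $T^*$ is by definition the real adjoint, so your caveat there is moot.

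The upgrade you single out as the main obstacle is exactly Lemma \ref{lem:isome}. It works once the unit $q$ is placed on the associative factor:
$\re\big(\langle T(p\epsilon_\alpha),T\epsilon_\beta\rangle\bar q\big)=\re\langle T(p\epsilon_\alpha),T(q\epsilon_\beta)\rangle=\re\big(\langle p\epsilon_\alpha,\epsilon_\beta\rangle\bar q\big)$.
Putting $\bar q$ on $pT\epsilon_\alpha$ instead stalls, because $\bar q(pT\epsilon_\alpha)$ and $T((\bar qp)\epsilon_\alpha)$ differ by the associator $[\bar q,p,T\epsilon_\alpha]$.

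The genuine difference is item $(3)$.

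\emph{The paper's route.} It proves $(2)\Rightarrow(3)$ by writing down $\xi_\alpha=\sum_\beta\langle\epsilon_\alpha,T\epsilon_\beta\rangle\epsilon_\beta$. It then checks by hand, via Parseval for $\{T\epsilon_\beta\}$, that $\xi$ is orthonormal, maximal and weak associative, and that it reproduces $T$ on the $\epsilon_\gamma$. For $(3)\Rightarrow(1)$ it exhibits the explicit preimage $\sum_\alpha\langle y,\epsilon_\alpha\rangle\xi_\alpha$; that is what your ``by symmetry'' should be replaced with.

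\emph{Your route.} You identify $\xi_\alpha=T^*\epsilon_\alpha$. This is indeed the paper's $\xi_\alpha$, and the identity $\langle x,T^*y\rangle=\langle Tx,y\rangle$ for $y\in\re H$ from Lemma \ref{lem:T* is paralin} gives it without invoking Riesz. You then get weak associativity by applying $(1)\Rightarrow(2)$ to $T^*$. The step you left open closes at once. By $(5)$, $T^*=T^{-1}$ is norm-preserving and surjective. Also $T^*$ is para-linear by Lemma \ref{lem:T* is paralin}, which you must cite, since $(1)\Rightarrow(2)$ uses $T^*(p\epsilon_\alpha)=pT^*\epsilon_\alpha$.

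\emph{Trade-off.} Your argument is shorter and explains $\xi$ conceptually as the image of $\epsilon$ under $T^{-1}$. Along the way it shows that $T^*$ is again a para-linear isometric isomorphism. The cost is that $(3)$ now depends on $(5)$ and on the adjoint lemma. The paper's computation obtains $(3)$ directly from $(2)$, using only Parseval and the vanishing of $A_p(u,v)$ when $u$ or $v$ is associative.
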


	We next consider the partial isometric operators. A para-linear operator $T$ is called a \textbf{partial isometry} if $T|_{(\ker T)^\perp}$ is an isometry. 
	
	\begin{thm}\label{intthm:p.iso}
		Let $T$ be a para-linear operator on $H$. Suppose $\ker T$ is an $\O$-submodule of $H$.
		Then the following are equivalent.
		\begin{enumerate}
			\item $T$ is a partial isometry.
			\item  For any 	associative orthonormal  set $\{\epsilon_\alpha\}_{\alpha\in \Lambda}$ in $(\ker T)^\perp$, $\{T\epsilon_\alpha\}_{\alpha\in \Lambda}$ is a weak associative orthonormal  set of $H$.
			\item There exists an 	associative orthonormal  basis $\{\epsilon_\alpha\}_{\alpha\in \Lambda}$ of Hilbert $\O$-submodule $(\ker T)^\perp$, such that $\{T\epsilon_\alpha\}_{\alpha\in \Lambda}$ is a weak associative orthonormal  set of $H$.
			
		\end{enumerate}
		
	\end{thm}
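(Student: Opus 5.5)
The plan is to go around the cycle (1) $\Rightarrow$ (2) $\Rightarrow$ (3) $\Rightarrow$ (1). Before starting, I would reduce to the isometric-isomorphism theorem above by working on the Hilbert $\O$-submodule $K:=(\ker T)^\perp$. Because $\ker T$ is assumed to be an $\O$-submodule, $K$ is again a Hilbert $\O$-bimodule; in particular it contains associative orthonormal bases, namely ones lying in $\re K$. Also $H=\ker T\oplus K$.

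\emph{(1) $\Rightarrow$ (2).} Let $\{\epsilon_\alpha\}$ be an associative orthonormal set in $K$. Since $T|_K$ is a real-linear isometry, polarization of $\re\fx{\cdot}{\cdot}$ gives $\re\fx{Tx}{Ty}=\re\fx{x}{y}$ for $x,y\in K$. Para-linearity of $T$, i.e. $\re(T(px)-pTx)=0$ in the bimodule sense, then upgrades this to the full octonionic inner product on elements of the form $px$. Concretely:
\begin{itemize}
\item I would apply the identity to $p\epsilon_\alpha$ and $\epsilon_\beta$, using that $p\epsilon_\alpha\in K$ because $K$ is a submodule.
\item Letting $p$ range over $\O$, this gives $\re\big(\overline{q}\fx{T(p\epsilon_\alpha)}{T\epsilon_\beta}\big)=\re\big(\overline q\,p\delta_{\alpha\beta}\big)$ for all $q$.
\item Hence $\fx{T(p\epsilon_\alpha)}{T\epsilon_\beta}=p\,\delta_{\alpha\beta}$.
\end{itemize}
Next I would use the known description of para-linear operators on elements of the form $p\epsilon_\alpha$ with $\epsilon_\alpha$ real. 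The earlier structure result in the cited works gives $T(p\epsilon_\alpha)=pT\epsilon_\alpha$ up to a term orthogonal in real part. Combined with isometry (norm equality), this should force $\fx{pT\epsilon_\alpha}{T\epsilon_\beta}=p\delta_{\alpha\beta}$. That identity says exactly that $A_p(T\epsilon_\alpha,T\epsilon_\beta)=0$ and that the set is orthonormal, i.e. it is a weak associative orthonormal set.

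\emph{(2) $\Rightarrow$ (3)} is immediate: pick any associative orthonormal basis of $K$.

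\emph{(3) $\Rightarrow$ (1)} is where the Parseval theorem for weak associative orthonormal sets does the work. Take $x\in K$ and expand it as $x=\sum_\alpha \fx{x}{\epsilon_\alpha}\epsilon_\alpha$; since the $\epsilon_\alpha$ are real, this is an honest associative expansion. Then, by para-linearity and continuity, I expect $Tx=\sum_\alpha \fx{x}{\epsilon_\alpha}T\epsilon_\alpha$. If this holds, the weak associativity condition $\fx{pT\epsilon_\alpha}{T\epsilon_\beta}=p\delta_{\alpha\beta}$ lets me compute
$$\|Tx\|^2=\sum_\alpha|\fx{x}{\epsilon_\alpha}|^2=\|x\|^2,$$
which is the Bessel/Parseval identity for a weak associative orthonormal set quoted in the introduction.

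The main obstacle is exactly the identity $T(p\epsilon_\alpha)=pT\epsilon_\alpha$ for real $\epsilon_\alpha$, together with passing the sum through $T$. Para-linearity only controls real parts, so I would first try to prove it from the characterization in \cite{huoqinghai2020nonass}: a para-linear map is determined by its values on $\re H$ via $T(px)=pT(x)$ for $x\in\re H$, its left-module version. Failing that, I would reuse the corresponding step from the proof of the previous theorem, applied to $T|_K$ with codomain $H$.
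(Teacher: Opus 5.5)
Your proposal is correct and follows essentially the paper's proof. For (1)$\Rightarrow$(2) you use polarization on $(\ker T)^\perp$, and the paper's octonionic polarization identity is just your real polarization read off componentwise; (2)$\Rightarrow$(3) is trivial; and for (3)$\Rightarrow$(1) you expand $Tx=\sum_\alpha\fx{x}{\epsilon_\alpha}T\epsilon_\alpha$ and apply the Bessel identity for weak associative orthonormal sets. The step you flag as the main obstacle is exactly part (1) of the quoted structure theorem from \cite{huoqinghai2020nonass}, which gives $T(p\epsilon_\alpha)=pT\epsilon_\alpha$ exactly for $\epsilon_\alpha\in\re H$, not merely up to a real-part-orthogonal term; this is also what makes your second bullet work, once you apply the real isometry to $p\epsilon_\alpha$ and $q\epsilon_\beta$.
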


	Let $H=\O^n$. Each para-linear operator $T$ on $H$ can be  viewed as  an octonionic matrix $\pmb{T}$ with respect  to some associative orthonormal basis. Then we get
	\begin{thm}\label{intthm:ft equv}
		The following are equivalent:
		\begin{enumerate}
			\item 	$\pmb{T}$ is an (partial) isometry.
			\item The row vectors  of $\pmb{T}$ form a weak  associative orthonormal basis (set) of $\O^n$.
		\end{enumerate}
		
	\end{thm}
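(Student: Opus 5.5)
The plan is to deduce Theorem~\ref{intthm:ft equv} from the two preceding theorems by making the correspondence between $T$ and $\pmb{T}$ explicit. Let $\{\epsilon_i\}_{i=1}^n$ be the standard basis of $\O^n$, that is, $\epsilon_i$ has $1$ in the $i$-th slot and $0$ elsewhere. This is an associative orthonormal basis, since each $\epsilon_i$ has real entries and so lies in $\re \O^n=\R^n$. By the description recalled in the introduction, a left para-linear operator on $\O^n$ acts on row vectors from the right by an octonionic matrix, $Tx=x\pmb{T}$. Applying this to $x=\epsilon_i$ gives $T\epsilon_i=\epsilon_i\pmb{T}$, which is exactly the $i$-th row $r_i$ of $\pmb{T}$. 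No associativity issue arises here, because the entries of $\epsilon_i$ are real and real scalars commute and associate with everything. Hence the statement ``the row vectors of $\pmb{T}$ form a weak associative orthonormal basis (set)'' is the same as the statement ``$\{T\epsilon_i\}$ is a weak associative orthonormal basis (set)''.

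For the isometry case, I would apply the equivalence $(1)\Leftrightarrow(4)$ of the first theorem to the basis $\{\epsilon_i\}$. This gives: $T$ is an isometric isomorphism if and only if the rows $\{r_i\}$ form a weak associative orthonormal basis. Two gaps remain to be closed.
\begin{enumerate}
\item In finite dimension, an isometry $\pmb{T}$ on $\O^n$ should automatically be an isomorphism. I would argue via real dimension: $T$ is a real linear injective map of $\R^{8n}$ into itself, so it is surjective.
\item The wording in (2) is ``basis'' rather than ``orthonormal set of $n$ elements''. This is consistent with the convention of the first theorem, because weak associative orthonormal bases all have the same cardinality $n$.
\end{enumerate}

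For the partial isometry case, I would use Theorem~\ref{intthm:p.iso}, whose hypothesis is that $\ker T$ is an $\O$-submodule. This is the step I expect to be the main obstacle. There are two ways to handle it. One is to check that $\ker T$ is a submodule for matrix operators; the concern is that $x\pmb{T}=0$ does not obviously imply $(px)\pmb{T}=0$ in the nonassociative setting, so this may fail. The other is to read the matrix statement as tacitly assuming this condition. Assuming the hypothesis holds, I would choose an associative orthonormal basis $\{\epsilon_\alpha\}$ of $(\ker T)^\perp$ and extend it to an associative orthonormal basis of $\O^n$ by adding a basis of the associative part of $\ker T$. In that basis, the rows corresponding to $\ker T$ are zero and the remaining rows are the vectors $T\epsilon_\alpha$. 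Then $(1)\Leftrightarrow(3)$ of Theorem~\ref{intthm:p.iso} says that the nonzero rows form a weak associative orthonormal set.

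A subtlety remains: the theorem states the criterion for the standard basis, whereas the argument above uses an adapted basis. To pass between them, I would show that a change of associative orthonormal basis acts on $\pmb{T}$ by real orthogonal matrices on both sides. I would then check that the property ``the rows form a weak associative orthonormal set'' is preserved under such real orthogonal recombinations. Real scalars associate and commute, so the associators $A_p$ transform bilinearly under real combinations, and this gives the required invariance.
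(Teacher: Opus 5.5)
Your isometry case is the paper's argument. The rows of $\pmb T$ are $T\pmb{\epsilon^i}$ by definition, an isometry of $\O^n$ is bijective by a real dimension count, and Theorem~\ref{thm:equv isome} finishes. Strictly, use (1)$\Rightarrow$(2) for the standard basis in one direction and (4)$\Rightarrow$(1) in the other, since (4) is only existential.

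For partial isometries, choosing an associative orthonormal basis adapted to $(\ker T)^\perp$ and $\ker T$ and invoking Theorem~\ref{thm:p.iso} is also the paper's route. That is how its matrix version (first rows a weak associative orthonormal set, remaining rows zero) is obtained. Your worry about the hypothesis on $\ker T$ is justified, and only your second option works. The paper's matrix statements inherit the hypothesis silently from Theorem~\ref{thm:p.iso}, and it cannot be dropped. Take $\pmb T=\frac{1}{\sqrt2}\begin{pmatrix}1&0\\ e_1&0\end{pmatrix}$:
\begin{itemize}
\item $\ker T=\{(-ye_1,y):y\in\O\}$ is not an $\O$-submodule, since $(e_3,e_2)\in\ker T$ but $e_4(e_3,e_2)=(-e_7,-e_6)\notin\ker T$.
\item $\ker T$ contains no nonzero real vector.
\item $T(a,-ae_1)=(\sqrt2\,a,0)$, so $T$ is isometric on $(\ker T)^{\perp_{\R}}=\{(a,-ae_1):a\in\O\}\supseteq(\ker T)^\perp$.
\end{itemize}
So $T$ is a partial isometry. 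Yet in no associative orthonormal basis does its matrix have the row form: zero rows would give real kernel vectors, and no zero rows would make $T$ bijective.

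The genuine error is your last paragraph. The statement is not a criterion for the standard matrix. In the introduction $\pmb T$ is the matrix ``with respect to some associative orthonormal basis'', and for partial isometries this must be the adapted basis. Your claimed invariance under real orthogonal changes of basis is false once zero rows occur. Under $\pmb T\mapsto O\pmb T O^{T}$, the vanishing of the $A_p$ between rows is indeed preserved by real bilinearity. But the Gram matrix $\mathrm{diag}(I_k,0)$ of the rows becomes $O\,\mathrm{diag}(I_k,0)\,O^{T}$, so the nonzero rows stop being orthonormal.

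For example, the orthogonal projection onto $\O\cdot\frac{1}{\sqrt2}(1,1)$ is a partial isometry with submodule kernel $\O(1,-1)$. Its matrix in the basis $\frac{1}{\sqrt2}(1,\pm1)$ is $\begin{pmatrix}1&0\\0&0\end{pmatrix}$, but its standard matrix $\frac12\begin{pmatrix}1&1\\1&1\end{pmatrix}$ has non-orthonormal rows. So this step would establish a false statement; delete it. The invariance is valid only when all $n$ rows are orthonormal, i.e.\ in the isometry case, where you do not need it.

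What you do need, and do not supply, is the converse direction from the row form. There you cannot start from an adapted basis, so identify the kernel first. Suppose $Tf_i$ for $i\le k$ form a weak associative orthonormal set and $Tf_i=0$ for $i>k$. Then $\fx{Tx}{Tf_j}=\fx{x}{f_j}$ for $j\le k$. Hence $\ker T$ is the $\O$-span of $f_{k+1},\dots,f_n$, automatically a submodule, and $(\ker T)^\perp$ is spanned by $f_1,\dots,f_k$. Now Theorem~\ref{thm:p.iso} (3)$\Rightarrow$(1) applies; this is the content of the paper's remark after that theorem.
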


	We remark that if $\pmb{T}$ is an isometry, then we have 	$$\pmb{TT^*}=\pmb{T^*T}=\pmb{I}.$$ However, different from the classical cases, the converse is false. Let $$\pmb{T}=\dfrac{1}{\sqrt{2}}\left(\begin{array}[c]{ccc}
		1&e_1 \\
		-e_3& e_2
	\end{array}\right).$$
	One can check that $\pmb{T}$ satisfies that $\pmb{TT^*}=\pmb{T^*T}=\pmb{I}$, but the row vectors of $\pmb{T}$ is not a weak associative orthonormal basis, which means that $\pmb{T}$ is not an isometry.

	{Inspired by these results, we give a modification to James questions.
	In 1958, I. M. James \cite{james1958stiefel} defined the octonionic Stiefel space $V_k(\O^n)$ as  the space of orthonormal $k$-frames in $\O^n$, i.e.,
	\begin{eqnarray}\label{eqdef:James}
		V_k(\O^n):=\{(x_1,\dots, x_k)\mid x_i\in \O^n,\fx{x_i}{x_j}_{\O}=\delta_{ij}, 1\leqslant i,j\leqslant k\}.
	\end{eqnarray}
	James conducted a preliminary study of the case $k = 2$ and posed two fundamental questions concerning the general structure of $V_k(\mathbb{O}^n)$:
 	\begin{enumerate}
			\item When is the canonical
			projection
		{$$\pi: V_k(\O^n)\to V_l(\O^n)  \qquad   (1\leqslant l< k\leqslant n)$$}
			a fiber map?
			\item  Is $	V_k(\O^n)$ a manifold?
	\end{enumerate}}
	
	 {When $k = 2$, James \cite{james1958stiefel} 
	 	proved that the projection $\pi:V_2(\O^n)\to V_1(\O^n) $  is indeed a fiber map, implying that $V_2(\mathbb{O}^n)$ has a manifold structure.  
	 	James  \cite{james1958stiefel} also found that a cross-section of the fiber bundle $\pi:V_2(\O^n)\to V_1(\O^n) $ occurs when $n = 240$. More recently, Qian,  Tang, and  Yan \cite{qianchao2022PJM,qianchao2022}  provided partial answers to these classical questions using the theory of isoparametric hypersurfaces \cite{Tangziz2014jfa,Tangziz2013jdg,Tangziz2015adv}.
	 In particular, they showed  \cite{qianchao2022PJM} that the sphere bundle 
	 $$S^{8n-9}\hookrightarrow V_2(\O^n)\to V_1(\O^n)$$
	 admits a cross-section if and only if $n$ can be divided by $240$. They also demonstrated that for $n > k > 2$, the projection $\pi: V_{k+1}(\mathbb{O}^n) \to V_k(\mathbb{O}^n)$ is not a fibration in the sense of Serre, thereby providing a partial answer to Question (1). Further insights into Question (2) are given in
	  \cite{qianchao2022}. These findings highlight significant differences between the octonionic and classical cases and help motivate the modified James questions considered in our paper. 
	  }
	 

	In view of Theorem \ref{intthm:ft equv}, it is natural to consider a new notion of  octonionic Stiefel  spaces as \begin{eqnarray*}
		V^{\text{w}}_k(\O^n)
		:=\{(x_1,\dots, x_k)\mid x_i\in \O^n,(x_1,\dots, x_k) \text{ is a weak associative orthonormal set}\}.
	\end{eqnarray*}
	The canonical \textbf{James Questions} can thus be modified   as follows:
	\begin{enumerate}
		\item\label{intit:ques1} When is the canonical
		projection
		$$\pi: V^{\text{w}}_k(\O^n)\to V^{\text{w}}_l(\O^n)  \qquad   (1\leqslant l< k\leqslant n)$$
		a fiber map?
		\item \label{intit:ques2} Is $	V^{\text{w}}_k(\O^n)$ a manifold?
	\end{enumerate}
	We  give a negative answer to  \textbf{Questions} \eqref{intit:ques1} when $k=2$. 
	
	We calculate $V^{\text{w}}_2(\O^2)$ concretely. It turns out that
	$$V^{\text{w}}_2(\O^2)\cong\bigcup_{J\in \S^6}\S^7\times U_{\spc_J}(2)/ \sim, $$ where $\sim$ is an equivalence relation defined by
	$$(p,\pmb{T})\sim(q,\pmb{S})\iff p\pmb{T}=q\pmb{S}.$$
	Here $\S^6$ is the set of imaginary units of $\O$  and
	$U_{\spc_J}(2):=U({\spc_J}^{2})$ is the unitary group of ${\spc_J}^{2}$.
	This indicates that $V^{\text{w}}_n(\O^n)$ may possess a book structure with each page a Moufang loop structure, rather than a group structure.
	
	Note that the non-associative algebra of octonions and the Moufang loop of unit octonions have been applied in  $G_2$ theory,  such as \cite{Grigorian2017octonionbundles,Grigorian2021advloop}. The study  of octonionic isometric operators may have potential applications in $G_2$ theory. 


	\section{Para-linear  isometric operators on $\O$-Hilbert spaces}
	The theory of octonionic Hilbert spaces is initiated by Goldstine and Horwitz in 1964 \cite{goldstine1964hilbert}. Recently, we find a redundant axiom on their definition. This leads to a new notion of $\O$-\almost linearity \cite{huoqinghai2022Riesz}.
	We shall study the para-linear isometric operators in this section.
	\subsection{Para-linear maps and $\spo$-Hilbert spaces}
	We review in this subsection some basic notions  and properties of para-linear maps  on $\O$-modules and $\spo$-Hilbert spaces.

	The octonions   $\spo$  are the non-associative, non-commutative, normed division algebra over  $\spr$. For convenience, we  denote  $ e_0=1$. Let     $e_0,e_1,\dots,e_7$ be a basis of $\O$ as a real vector space.
	The multiplication rules can be expressed as
	\begin{align}\label{eq:epsilon notation}
		&e_ie_j=\epsilon_{ijk}e_k-\delta_{ij}, \quad i,j=1,\dots,7,
	\end{align}
	where $\delta_{ij}$ is the Kronecker delta, and $\epsilon_{ijk}$ is a completely antisymmetric tensor with value 1 when $$ijk = 123, 145, 176, 246, 257, 347, 365.$$

	A  real vector space $M$ is called  a left  $\O$-module if it is endowed with an $\O$-scalar multiplication such that    the \textbf{left associator} is    left  alternative.
	That is, for all  $p,q\in\O$ and $  m\in M$ we have
	\begin{eqnarray}\label{eq:left ass}
		[p,q,m]=-[q,p,m],
	\end{eqnarray}
	where the left associator is defined by
	$[p,q,m]:=(pq)m-p(qm).$
	An element $m\in M$ is called an associative element if for all $p,q\in \O$, there holds
	$$[p,q,m]=0.$$ We denote by $\huaa{M}$ the set of associative elements of  $M$.
	A left  $\O$-module $M$ is called an \textbf{$\O$-bimodule} if there exists a right $\O$-scalar multiplication such that 
	$$[p,q,m]=[m,p,q]=[q,m,p]$$ for all $p,q\in \O$. Here 
	$${[m,p,q]:=(mp)q-m(pq)},\quad [q,m,p]:=(qm)p-q(mp).$$
	
	Let \( M \) be an $\O$-bimodule. There is a  decomposition of \( M \) as a real vector space: 
	\begin{equation}\label{eq:reM=huaaM=435}  M = \huaa{M} \oplus e_1\huaa{M} \oplus \cdots \oplus e_7\huaa{M}. \end{equation}
	The projection onto the first component of this decomposition, 
	\[ \re: M \rightarrow \huaa{M}, \]
	is termed the \textbf{real part operator} of \( M \) \cite{huoqinghai2020nonass}. For any $x\in M$, there is a decomposition
	$$x=\sum_{i=0}^7e_ix_i,$$ 
	where $x_i\in \re M$ for $i=0,\dots,7$.
	We can express the real part operator explicitly as follows \cite{huoqinghai2020nonass}  
	\[ \re x = \frac{5}{12}x - \frac{1}{12} \sum_{i=1}^7 e_i x e_i. \]
	For more details about  the real part operator, we refer   to \cite{huoqinghai2020nonass}.
	For details on the real part structure in the context of quaternions, refer to \cite{ng2007quaternionic}.

	\medskip
	
	We now recall the definition of para-linear maps.
	
	\begin{mydef}[\cite{huoqinghai2022Riesz}] Let $M$ be a left $\O$-module and $M'$ be an $\O$-bimodule.
		A real linear map $f: M \xlongrightarrow[]{} M'$
		is called an \textbf{$\O$-\almost linear} map if it satisfies
		$$\re A_p(x,f)=0$$
		for any $p\in\O$ and $x\in M$.
		Here $$A_p(x,f):=f(px)-pf(x),$$ which is called the \textbf{second associator} related to $f$.
		
	\end{mydef}	
	
	\begin{rem}
		We change the notation of second associator which is denoted by $[p,x,f]$ in \cite{huoqinghai2022Riesz,huoqinghai2021tensor}. This is because the position of $x$ and $f$  are equivalent in view of Riesz representation theorem. And we shall use several kinds of associators many times in the sequel. To  avoid confusion, we adapt the new notation $A_p(x,f)$ which keeps the order of $p,x,f$. This is consistent with the notation in \cite{huoqinghai2020nonass}.
	\end{rem}
	
	The following theorem will be used frequently in the sequel.
	\begin{thm}[\cite{huoqinghai2020nonass}]\label{cor: T re=0 iff T=0}
		Let $M,M'$ be two  $\O$-bimodules and $f:M\to M'$ be a real linear map.   Then $f$ is para-linear if and only if
		\begin{eqnarray}
			A_p(x,f)=\sum_{i=1}^7
			e_i \re f([e_i,p,x]).
		\end{eqnarray}
		In particular, the following hold.
		\begin{enumerate}
			\item For any $x\in \re M$ and $p\in \O$,
			we have $A_p(x,f)=0$.
			\item $f=0$ if and only if $f|_{\re M}=0$.
		\end{enumerate}
		
	\end{thm}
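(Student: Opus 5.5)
The plan is to work throughout with the real part decomposition \eqref{eq:reM=huaaM=435}. The first step is a \emph{coordinate formula}: every $y\in M'$ satisfies
$$y=\sum_{i=0}^7 e_i\,\re(\overline{e_i}\,y).$$
To see this, write $y=\sum_j e_j y_j$ with $y_j\in\re M'$. Each $y_j$ is associative, so $\overline{e_i}(e_jy_j)=(\overline{e_i}e_j)y_j$, and $\overline{e_i}e_j$ is $\pm e_k$ with $k=0$ exactly when $i=j$. Hence $\re(\overline{e_i}(e_jy_j))=\delta_{ij}y_j$.

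The second step is the auxiliary fact $\re[a,b,y]=0$ for all $a,b\in\O$ and $y\in M'$. Writing $y=\sum_k e_ky_k$ with $y_k$ associative, one has $[a,b,e_ky_k]=[a,b,e_k]y_k$. The octonionic associator $[a,b,e_k]$ is purely imaginary, so this element lies in $\bigoplus_{l\ge1}e_l\huaa{M'}$ and has zero real part.

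\emph{Necessity.} Assume $f$ is para-linear. Then $\re f(qz)=\re(qf(z))$ for all $q\in\O$ and $z\in M$. Apply the coordinate formula to $f(px)$ and to $pf(x)$:
$$f(px)=\sum_i e_i\,\re\big(\overline{e_i}f(px)\big)=\sum_i e_i\,\re f\big(\overline{e_i}(px)\big),$$
$$pf(x)=\sum_i e_i\,\re\big(\overline{e_i}(pf(x))\big)=\sum_i e_i\,\re\big((\overline{e_i}p)f(x)\big)=\sum_i e_i\,\re f\big((\overline{e_i}p)x\big).$$
The middle equality in the second line uses the auxiliary fact. Subtracting gives
$$A_p(x,f)=-\sum_i e_i\,\re f\big([\overline{e_i},p,x]\big).$$
The $i=0$ term vanishes, and for $i\ge1$ we have $\overline{e_i}=-e_i$, which yields the stated identity. \emph{Sufficiency.} Take real parts of the identity: each summand $e_i\re f(\cdot)$ with $i\ge1$ lies in $e_i\huaa{M'}$, so $\re A_p(x,f)=0$, which is para-linearity.

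For item (1), if $x\in\re M$ then $[e_i,p,x]=0$ for every $i$, so the identity gives $A_p(x,f)=0$. For item (2), suppose $f|_{\re M}=0$ and write $x=\sum_j e_jx_j$ with $x_j\in\re M$. By (1), $f(e_jx_j)=e_jf(x_j)+A_{e_j}(x_j,f)=0$, and summing over $j$ gives $f(x)=0$. The converse direction of (2) is trivial.

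The step I expect to need the most care is the justification of the two structural facts, namely the coordinate formula and $\re[a,b,y]=0$. Both rest on the property that elements of $\re M'$ are associative, together with the bimodule associator identities, which ensure that the left action on $e_k y_k$ reduces to octonion multiplication on $e_k$. If that reduction is not immediate from the axioms, I would instead invoke the known structure theorem $M'\cong\O\otimes_{\R}\re M'$ from \cite{huoqinghai2020nonass}.
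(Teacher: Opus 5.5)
The paper does not prove this statement. It is quoted from \cite{huoqinghai2020nonass} and used as a black box, so there is no internal argument to compare yours against.

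Your proof is correct and self-contained. The following steps are all sound:
\begin{itemize}
\item the coordinate formula $y=\sum_{i=0}^7 e_i\re(\overline{e_i}\,y)$;
\item the vanishing of $\re[a,b,y]$;
\item the two expansions of $f(px)$ and $pf(x)$ via para-linearity;
\item the sign bookkeeping, namely $[1,p,x]=0$, and $[\overline{e_i},p,x]=-[e_i,p,x]$ for $i\geq 1$;
\item the converse, obtained by taking real parts.
\end{itemize}
Items (1) and (2) follow as you say. As you implicitly assume, both are read under the standing hypothesis that $f$ is para-linear.

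The concern in your last paragraph is unnecessary. For $y_k\in\huaa{M'}$, the identity $q(e_ky_k)=(qe_k)y_k$ is exactly the vanishing of the left associator $[q,e_k,y_k]$, which is the definition of an associative element. The real part of $\sum_{l\geq 1}e_l(c_ly_k)$ is then zero because $\re$ is the projection in \eqref{eq:reM=huaaM=435}. No tensor-product structure theorem is needed.

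Your route also makes one thing visible. The main equivalence and item (1) use only the left module structure of $M$, with $\huaa{M}$ in place of $\re M$. The bimodule hypothesis on $M$ enters only in (2), through the decomposition $M=\bigoplus_{j} e_j\huaa{M}$, and it cannot be dropped there. For example, $\O$ with the left action $p\cdot x:=xp$ is a left $\O$-module with $\huaa{M}=0$. Yet the identity map into $\O$ is a nonzero para-linear map, since $\re(xp-px)=0$.
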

	
	We next introduce the regular composition of para-linear operators.

	{	\begin{mydef}[\cite{huoqinghai2020nonass}]\label{def:regular composition}	
		Given two left $\spo$-modules $M$ and $M'$, and an $\spo$-bimodule $M''$. If $f \in \Hom_{\mathcal{LO}}(M',M'')$ and $g \in \Hom_{\spr}(M,M')$, the \textbf{(left) regular composition} of $f$ and $g$ is the left para-linear map $f\circledcirc g$ defined as:
	\begin{equation}\label{def:regular composition-038}
			(f\circledcirc g)(x) := f(g(x))+[f,g,x].
		\end{equation}
		where   
		$$[f,g,x]:=-\sum_{i=0}^7 e_i \re \Big( f \big(A_{e_i} (x,g)\big) \Big).$$
	\end{mydef}}
	
	\begin{lemma}[\cite{huoqinghai2020nonass}]\label{lem:vanishing[f,g,x]=0}
		Let $M$ and $M'$ be two left $\spo$-modules, and let $M''$ be an $\spo$-bimodule. The relation $[f,g,x]=0$ holds, yielding the equality: $$f\circledcirc g(x)=f (g(x))$$ 
		under any of the following scenarios:
		\begin{enumerate}
			\item $g\in \Hom_{\spo}(M,M')$;
			\item $M'$ is an $\O$-bimodule, $x\in \huaa{M}$,  and  $g\in \Hom_{\mathcal{LO}} (M,M')$;
			\item $M'$ is an $\O$-bimodule, $f\in  \Hom_{\spo}(M',M'')$,    and  $g\in \Hom_{\mathcal{LO}} (M,M')$,
		\end{enumerate}
	\end{lemma}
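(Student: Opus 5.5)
The statement to prove is Lemma~\ref{lem:vanishing[f,g,x]=0}. Since
$$[f,g,x]=-\sum_{i=0}^7 e_i \re\big(f(A_{e_i}(x,g))\big),$$
it suffices to show that in each scenario every summand $e_i\re\big(f(A_{e_i}(x,g))\big)$ vanishes. The three cases are handled separately, each reducing to a vanishing statement either for the second associator of $g$ or for $\re$ composed with $f$.

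\textbf{Case (1).} If $g\in\Hom_{\spo}(M,M')$, then $g$ is left $\spo$-linear, so $g(px)=pg(x)$ for all $p$ and $x$. Hence $A_{e_i}(x,g)=0$ for every $i$. Since $f$ is real linear, $f(0)=0$, and every term vanishes.

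\textbf{Case (2).} Now $M'$ is a bimodule, $x\in\huaa{M}$ and $g$ is para-linear. The plan is to invoke Theorem~\ref{cor: T re=0 iff T=0}, which gives
$$A_{e_i}(x,g)=\sum_{j=1}^7 e_j\re g([e_j,e_i,x]).$$
Because $x$ is associative, $[e_j,e_i,x]=0$, so $A_{e_i}(x,g)=0$. (The theorem is stated for bimodules $M$. If $M$ is only a left module, I would rederive the identity from the left-module decomposition restricted to associative elements, which is essentially part~(1) of the theorem.) As in Case~(1), the sum is zero.

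\textbf{Case (3).} Here $f$ is $\spo$-linear between bimodules and $g$ is para-linear. By para-linearity, $y_i:=A_{e_i}(x,g)$ satisfies $\re y_i=0$. The key step is to show that $\re f(y)=f(\re y)$ for an $\spo$-linear map $f$ between bimodules. Using the decomposition \eqref{eq:reM=huaaM=435}, write $y=\sum_k e_k y_k$ with $y_k\in\huaa{M'}$. Then:
\begin{enumerate}
\item $f(y)=\sum_k e_k f(y_k)$ by left linearity;
\item each $f(y_k)$ is associative, since $f((pq)y_k)=(pq)f(y_k)$ and $f(p(qy_k))=p(qf(y_k))$ give $[p,q,f(y_k)]=f([p,q,y_k])=0$.
\end{enumerate}
By uniqueness of the decomposition in $M''$, $\re f(y)=f(y_0)=f(\re y)$. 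Applied to $y_i$, this gives $\re f(y_i)=f(\re y_i)=0$, so every term vanishes.

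The main obstacle is the commutation $\re\circ f=f\circ\re$ in Case~(3), which relies on $\spo$-linear maps preserving associative elements. A secondary issue is the generality of Case~(2) when $M$ is merely a left module.
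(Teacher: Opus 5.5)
Your proposal is correct. The paper does not prove this lemma itself; it quotes it from \cite{huoqinghai2020nonass}. Your case analysis is the natural direct verification: $A_{e_i}(x,g)=0$ in Cases (1) and (2), and $\re\circ f=f\circ\re$ for left $\O$-linear $f$ between bimodules in Case (3).

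The ``secondary issue'' you raise in Case (2) is harmless, and it requires no decomposition of $M$ at all. The identity in Theorem~\ref{cor: T re=0 iff T=0} uses only the real-part structure of the target bimodule $M'$.

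Concretely, every $y\in M'$ satisfies $y=\sum_{i=0}^7 e_i\re(\overline{e_i}y)$, because $\re\big(\overline{e_i}(e_jy_j)\big)=\re\big((\overline{e_i}e_j)y_j\big)=\delta_{ij}y_j$ for $y_j\in\huaa{M'}$. Also $\re[a,b,m]=0$ in $M'$, since $[a,b,e_km_k]=[a,b,e_k]m_k$ and octonionic associators are purely imaginary.

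Now apply para-linearity of $g$ once with the scalar $\overline{e_i}$ and once with $\overline{e_i}p$. For any left $\O$-module $M$ this gives
$$\re\big(\overline{e_i}A_p(x,g)\big)=\re g\big(\overline{e_i}(px)-(\overline{e_i}p)x\big)=-\re g\big([\overline{e_i},p,x]\big).$$
When $x\in\huaa{M}$ the right-hand side vanishes, so $A_p(x,g)=0$. Case (2) therefore holds exactly as you argued, with $M$ only a left module.
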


	\begin{eg}\label{eg:paralinear}
		It is clear that $\O^n$ is an $\O$-bimodule and the real part of $\O^n$ is $\R^n$.
		Consider a matrix 
		$$\pmb{T}:=\left(\begin{array}[c]{ccc}
			t_{ij}
		\end{array}\right)_{n\times n}\in \End(\O^n).$$
		Then the map 
		\begin{eqnarray*}
			f_{\pmb{T}}:\O^n&\to& \O^n\\
			(x_1,\dots,x_n)&\mapsto& \left(\sum_{i=1}^nx_it_{i1},\dots,\sum_{i=1}^nx_it_{in}\right)
		\end{eqnarray*}
		is  a left $\O$-para-linear operator.
		The second associator of $f_{\pmb{T}}$ is 
		$$A_p((x_1,\dots,x_n),f_{\pmb{T}})=\left(\sum_{i=1}^n[p,x_i,t_{i1}],\dots,\sum_{i=1}^n[p,x_i,t_{in}]\right)
		$$	for any $p\in \O$. 	One can see that there is a bijection between the set of   left para-linear operators on $\O^n$ and octonionic matrices on $\O^n$.

		Let 		$\pmb{S}:=\left(\begin{array}[c]{ccc}
			s_{ij}
		\end{array}\right)_{n\times n}\in \End(\O^n)$ be another octonionic matrix and denote by $f_{\pmb{S}}$ the left para-linear operator induced by $\pmb{S}$. Then for any $(x_1,\dots,x_n)\in \re \O^n=\R^n$,
		\begin{align*}
			(f_{\pmb{S}}\circledcirc f_{\pmb{T}})((x_1,\dots,x_n))&=(f_{\pmb{S}}\circ f_{\pmb{T}})(x_1,\dots,x_n)\\
			&=\left(\sum_{i,j=1}^n(x_it_{ij})s_{j1},\dots,\sum_{i,j=1}^n(x_it_{ij})s_{jn}\right)\\
			&=\left(\sum_{i,j=1}^nx_i(t_{ij}s_{j1}),\dots,\sum_{i,j=1}^nx_i(t_{ij}s_{jn})\right)
		\end{align*}
		Let $\pmb{K}:=\pmb{T}\pmb{S}$ be the matrix obtained by the octonionic multiplication of octonionic matrices and $f_{\pmb{K}}$ be the para-linear induced by the matrix $\pmb{K}$.
		Then $$f_{\pmb{K}}x=(f_{\pmb{S}}\circledcirc f_{\pmb{T}})(x)$$ for any $x\in \re \O^n$.
		It follows from Theorem \ref{cor: T re=0 iff T=0} that $f_{\pmb{S}}\circledcirc f_{\pmb{T}}=f_{\pmb{K}}$.
		This means that the regular composition of two left para-linear operators induced by octonionic matrices is the para-linear operator induced by the opposite multiplication of octonionic matrices.

	\end{eg}
	
	\bigskip

	The theory of $\O$-Hilbert spaces is initiated by Goldstine and Horwitz in 1964 \cite{goldstine1964hilbert}. 
	The definition of $\O$-Hilbert spaces can be restated in terms of \almost linearity.
	\begin{mydef}[\cite{huoqinghai2022Riesz}]\label{def:hilbert O space}
		A left $\spo$-module $H$ is called a \textbf{ pre-Hilbert  $\spo$-module} if there exists an $\spr $-bilinear map $\left\langle\cdot,\cdot \right\rangle :H\times H \rightarrow \O$, which is referred to as an \textbf{$\spo$-inner product}, satisfying:
		\begin{enumerate}
			\item \textbf{($\O$-\almost linearity)} $\left\langle\cdot,u\right\rangle$ is (left) $\spo$-\almost linear for all $u\in H$.
			\item \textbf{(Octonion hermiticity)} $\left\langle u ,v\right\rangle=\overline{\left\langle v ,u\right\rangle}$ for all $ u,v\in H$.
			\item \textbf{(Positivity)} $\left\langle u ,u\right\rangle\in \spr^+$ and $\left\langle u ,u\right\rangle=0$ if and only if $u=0$.
		\end{enumerate}
		A pre-Hilbert left $\spo$-module	$H$ is said to be a \textbf{Hilbert left $\spo$-module} if it is complete with respect to its natural distance  induced by the norm  $\fsh{\cdot}:H\rightarrow \mathbb{R}^+ $
		\begin{eqnarray}\label{def:fshu ||u||}
			\fsh{u}=\sqrt{\fx{u}{u}}.
		\end{eqnarray}
	\end{mydef}
	
	We denote  $$\left\langle x,y\right\rangle _{\spr}:=\re\left\langle x,y\right\rangle$$ in this article.
	An important relation of the $\O$-inner product with $\fx{\cdot}{\cdot}_\R$ is
	\begin{eqnarray}\label{eq:<>=sum <>R}
		\fx{u}{v}=\sum_{i=0}^7e_i\fx{\overline{e_i}u}{v}_\R.
	\end{eqnarray}

	Denote  $$A_p(u,v):=\left\langle pu ,v\right\rangle-p\left\langle u ,v\right\rangle$$ for $u,v\in H$ and $p\in \O$, and  call $A_p(u,v)$  \textbf{the second associator} of $H$.
	We collect some identities for  the second associators of $H$.

	\begin{lemma}\label{lem:the second associator of H 's prop}
		For all $u,v\in H$ and all $p,q\in \spo$, the following  hold.
		\begin{eqnarray}
			A_p(u,v)&=&-A_p(v,u);\label{eq:Ap(u,v)=-Ap(v,u)}\\
			\fx{u}{pv}&=&\fx{u}{v}\overline{p}+A_p(u,v);\label{eq:<u,pv>=<u,v>p^+Ap(uv)}\\
			\fx{pu}{qv}&=&(p\fx{u}{v})\overline{q}+A_{pq}(u,v)+\fx{[p,q,v]}{u}.\label{eq:<pu,pv>in lemma}
		\end{eqnarray}
	\end{lemma}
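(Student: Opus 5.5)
We prove the three identities of Lemma \ref{lem:the second associator of H 's prop} in order, each time polarizing the quadratic relation $\fx{px}{x}=p\fx{x}{x}$ (equation \eqref{eq:1}, which follows from para-linearity as noted in the introduction), together with octonion hermiticity and the alternative laws of $\O$.

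\textbf{Skew-symmetry \eqref{eq:Ap(u,v)=-Ap(v,u)}.} Since $\fx{x}{x}$ is real, \eqref{eq:1} says exactly $A_p(x,x)=0$ for all $x\in H$. The map $(u,v)\mapsto A_p(u,v)$ is real bilinear. Polarizing, i.e. expanding $A_p(u+v,u+v)=0$, gives $A_p(u,v)+A_p(v,u)=0$. If \eqref{eq:1} has not yet been established in this framework, we first derive it. By \eqref{eq:<>=sum <>R} and para-linearity of $\fx{\cdot}{x}$ we have $\re A_p(x,x)=0$. The imaginary components of $\fx{px}{x}$ are $\fx{\overline{e_i}(px)}{x}_\R$, and these are handled by the left alternativity \eqref{eq:left ass} of the module. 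We reduce to showing $\fx{qx}{x}_\R=\re(q)\fx{x}{x}$, which follows from the real-part identity applied to $q$ and $\overline q$ combined with hermiticity.

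\textbf{Identity \eqref{eq:<u,pv>=<u,v>p^+Ap(uv)}.} By hermiticity, $\fx{u}{pv}=\overline{\fx{pv}{u}}=\overline{p\fx{v}{u}+A_p(v,u)}$. This equals $\fx{u}{v}\overline p+\overline{A_p(v,u)}$. It then remains to check $\overline{A_p(v,u)}=A_p(u,v)$, i.e. $\overline{A_p(v,u)}=-A_p(v,u)$ by the first identity. So we must show that $A_p$ is purely imaginary. Its real part vanishes by para-linearity, since $\re(\fx{pv}{u}-p\fx{v}{u})=0$. Hence $\overline{A_p(v,u)}=-A_p(v,u)=A_p(u,v)$, as required.

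\textbf{Identity \eqref{eq:<pu,pv>in lemma}.} Apply the second identity with $u$ replaced by $pu$ and $p$ replaced by $q$:
\[
\fx{pu}{qv}=\fx{pu}{v}\overline q+A_q(pu,v)=(p\fx{u}{v})\overline q+A_p(u,v)\overline q+A_q(pu,v).
\]
We then need
\[
A_p(u,v)\overline q+A_q(pu,v)=A_{pq}(u,v)+\fx{[p,q,v]}{u}.
\]
We expand everything through skew-symmetry, $A_q(pu,v)=-A_q(v,pu)=-\fx{qv}{pu}+q\fx{v}{pu}$, and similarly for the other terms. We then use the second identity again to move $p$ out of the second slot. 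Writing $[p,q,v]=(pq)v-p(qv)$, the term $\fx{[p,q,v]}{u}$ is exactly what absorbs the discrepancy between $\fx{(pq)v}{u}$ and $\fx{p(qv)}{u}$.

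\textbf{Main obstacle.} This last bookkeeping is the hard step. Terms such as $(p\overline{\fx{}{}})\overline q$ produce octonion associators $[p,\cdot,\overline q]$ which must cancel, using the alternativity of $\O$. The plan is to first verify the identity for $p,q$ basis units via the real-linearity in $p,q$. If the direct expansion becomes unwieldy, a more robust alternative is to take real parts against an arbitrary $r\in\O$. Concretely, we would verify $\re\big(\overline r\,\cdot\,\text{LHS}\big)=\re\big(\overline r\,\cdot\,\text{RHS}\big)$ using \eqref{eq:<>=sum <>R}, which reduces everything to real inner products $\fx{\cdot}{\cdot}_\R$ of vectors of the form $a(bu)$ and $c(dv)$.
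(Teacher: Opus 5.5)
\textbf{The third identity is not proved.} Your arguments for the first two identities are fine once $\fx{px}{x}=p\fx{x}{x}$ is granted. Citing that fact from the earlier work is legitimate: the paper gives no proof of this lemma and simply collects the identities from that work, so your argument has to stand on its own.

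Your fallback derivation of $\fx{px}{x}=p\fx{x}{x}$ is not right as written. The relation $\fx{qx}{x}_\R=\re(q)\fsh{x}^2$ is immediate from para-linearity and only controls the real part. The imaginary components require $\fx{\overline{e_i}(px)}{x}_\R=\fx{px}{e_ix}_\R=\re(p\overline{e_i})\fsh{x}^2$. That uses three things: the real adjoint relation $\fx{ax}{y}_\R=\fx{x}{\overline{a}y}_\R$ (from para-linearity and hermiticity), the identity $\overline{a}(ax)=\abs{a}^2x$ (from left alternativity of $H$), and polarization.

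Your reduction of the third identity to $A_p(u,v)\overline{q}+A_q(pu,v)=A_{pq}(u,v)+\fx{[p,q,v]}{u}$ is correct. But you stop there, and your diagnosis of what remains is wrong: it is not just a matter of octonion associators cancelling by alternativity of $\O$.

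Carry out your own plan. Expand $\fx{[p,q,v]}{u}$ through the definition of $A$, write $A_q(pu,v)=-\fx{qv}{pu}+q\fx{v}{pu}$, and apply the second identity to both terms. Set $w=\fx{v}{u}$. Then three cancellations occur:
the octonion associators cancel, since $-[q,w,\overline{p}]=[p,q,w]$;
the $A_{pq}$ terms cancel by skew-symmetry;
the $A_p(qv,u)$ terms cancel.
What is left to prove is
\[
A_p(u,v)\overline{q}+qA_p(v,u)+pA_q(v,u)-A_q(v,u)\overline{p}=0 .
\]
The second associators are purely imaginary, so the left side equals $2\re\big(qA_p(v,u)+pA_q(v,u)\big)$. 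By para-linearity this is $-2\fx{[q,p,v]+[p,q,v]}{u}_\R$. It vanishes precisely because of the left alternativity $[p,q,m]=-[q,p,m]$ of the module $H$; nothing about $\O$ alone forces it.

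This module-level input is the missing idea. Your proposal never performs the expansion and never names this ingredient. Neither the reduction to basis units nor the suggested test against $\re(\overline{r}\,\cdot)$ supplies it as stated. So the third identity remains unestablished.
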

	
	%
	If an $\O$-bimodule admits a complete  $\O$-inner product, then it is called a \textbf{Hilbert $\O$-bimodule}.
	It turns out that a  Hilbert $\O$-bimodule $H$ is actually the tensor product of its real part $\re H $  and the algebra of octonions $\O$; see \cite{huoqinghai2021tensor}.
	Indeed, for any associative elements $u,v\in \re H$, we have 
	$$\fx{u}{v}\in \R.$$
	Hence for any  $x,y\in H$ with decompositions
	$$x=\sum_{i=0}^7e_ix_i,\quad y=\sum_{i=0}^7e_iy_i,$$  where $x_i,y_i\in \re H$ for $i=0,\dots ,7$, we have
	\begin{eqnarray}\label{eq:tensor pd}
		\fx{x}{y}=\sum_{i,j=0}^7e_i\overline{e_j}\fx{x_i}{y_j}.
	\end{eqnarray}

	We next give some preliminary discussion of dual operation.
	Let $T$ be a para-linear operator defined on a Hilbert $\O$-bimodule $H$. Denote by $T^*$ the real  dual operator of $T$, in the sense of  regarding $T$ as a real linear operator of the real Hilbert space $(H,\fx{\cdot}{\cdot}_{\spr})$. That is, for any $x,y\in H$, 
	$$\fx{Tx}{y}_{\spr}=\fx{x}{T^*y}.$$
	\begin{lemma}\label{lem:T* is paralin}
		Let $T$ be a para-linear operator defined on a Hilbert $\O$-bimodule $H$. Then $T^*$ is also para-linear. Moreover, we have
		\begin{equation}\label{eq:<x,T*y>=<TX,y>+[yTx]}
			\fx{x}{T^*y} = \fx{Tx}{y}-\sum_{i=1}^7 e_i\fx{A_{e_i}(x,T)}{y}_{\spr}. 
		\end{equation}
		In particular, if one of $x$ and $y$ is in $\re H$, then 
		\begin{eqnarray}\label{eq:<xTy>=<T*xy>}
			\fx{x}{T^*y} = \fx{Tx}{y}.
		\end{eqnarray}
	\end{lemma}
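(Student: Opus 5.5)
The plan is to derive formula \eqref{eq:<x,T*y>=<TX,y>+[yTx]} directly from the defining relation $\fx{Tx}{y}_{\spr}=\fx{x}{T^*y}_{\spr}$ of the real adjoint, together with the expansion \eqref{eq:<>=sum <>R}. The special case \eqref{eq:<xTy>=<T*xy>} then follows by showing that the correction term vanishes. Para-linearity of $T^*$ will be checked last, by testing against real elements.

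\textbf{Step 1 (the formula).} By \eqref{eq:<>=sum <>R} and the definition of $T^*$,
$$\fx{x}{T^*y}=\sum_{i=0}^7 e_i\fx{\overline{e_i}x}{T^*y}_{\spr}=\sum_{i=0}^7 e_i\fx{T(\overline{e_i}x)}{y}_{\spr}.$$
Write $T(\overline{e_i}x)=\overline{e_i}\,Tx+A_{\overline{e_i}}(x,T)$. Applying \eqref{eq:<>=sum <>R} once more, the terms $\sum_i e_i\fx{\overline{e_i}Tx}{y}_{\spr}$ add up to $\fx{Tx}{y}$. The associator $A_p(x,T)$ is real linear in $p$, vanishes for $p=1$, and $\overline{e_i}=-e_i$ for $i\ge1$. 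Hence the remaining terms give $-\sum_{i=1}^7 e_i\fx{A_{e_i}(x,T)}{y}_{\spr}$, which is \eqref{eq:<x,T*y>=<TX,y>+[yTx]}.

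\textbf{Step 2 (the special case).} If $x\in\re H$, then $A_{e_i}(x,T)=0$ by Theorem \ref{cor: T re=0 iff T=0}(1), so the correction term vanishes. If instead $y\in\re H$, Theorem \ref{cor: T re=0 iff T=0} gives $A_{e_i}(x,T)=\sum_j e_j\re T([e_j,e_i,x])$, which lies in $e_1\huaa{H}\oplus\cdots\oplus e_7\huaa{H}$. For $a,u\in\re H$ and $j\ge1$, \eqref{eq:tensor pd} shows $\fx{e_ja}{u}=e_j\fx{a}{u}$ with $\fx{a}{u}\in\R$, so this has zero real part. Thus $\re H$ is $\fx{\cdot}{\cdot}_{\spr}$-orthogonal to $e_j\re H$ for $j\ge1$, and the correction term again vanishes.

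\textbf{Step 3 (para-linearity of $T^*$).} By the same orthogonality and the decomposition \eqref{eq:reM=huaaM=435}, an element $z\in H$ satisfies $\re z=0$ if and only if $\fx{z}{u}_{\spr}=0$ for all $u\in\re H$. So it suffices to show $\fx{T^*(py)-pT^*y}{u}_{\spr}=0$ for $u\in\re H$.
\begin{itemize}
\item For the first term, the definition of $T^*$ (with hermiticity) gives $\fx{T^*(py)}{u}_{\spr}=\fx{py}{Tu}_{\spr}=\re(p\fx{y}{Tu})$, using para-linearity of the inner product in its first slot.
\item For the second term, $\fx{pT^*y}{u}_{\spr}=\re(p\fx{T^*y}{u})=\re\big(p\,\overline{\fx{u}{T^*y}}\big)$.
\item Since $u$ is real, Step 2 gives $\fx{u}{T^*y}=\fx{Tu}{y}$, so the second term equals $\re(p\fx{y}{Tu})$.
\end{itemize}
The two terms agree, so $T^*$ is para-linear.

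The main obstacle is Step 3. It needs the characterization of $\re z=0$ by testing against $\re H$, and the fact that Step 2 must be proved without already knowing $T^*$ is para-linear; Steps 1–2 indeed use only the defining relation of $T^*$. Everything else is bookkeeping with the sign $\overline{e_i}=-e_i$.
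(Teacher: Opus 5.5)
Your proposal is correct and follows essentially the same route as the paper. The formula comes from expanding $\fx{x}{T^*y}$ via $\fx{u}{v}=\sum_{i=0}^7e_i\fx{\overline{e_i}u}{v}_\R$ and the definition of the real adjoint. The special case follows from $A_{e_i}(x,T)=0$ for real $x$, respectively $\re A_{e_i}(x,T)=0$ together with the tensor-product form of the inner product. Para-linearity of $T^*$ is obtained by testing $A_p(\cdot,T^*)$ against elements of $\re H$. The only cosmetic difference is that you compare real parts directly, whereas the paper computes the octonion $\fx{A_p(x,T^*)}{y}=A_p(x,Ty)$ for $y\in\re H$ and then uses $\re A_p(x,Ty)=0$.
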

	
	\begin{proof}
		We first establish the formula \eqref{eq:<x,T*y>=<TX,y>+[yTx]}.
		In view of \eqref{eq:<>=sum <>R}, we have
		\begin{eqnarray*}
			\fx{x}{T^*y} &=&\sum_{i=0}^7e_i\fx{\overline{e_i}x}{T^*y}_\R\\
			&=&\sum_{i=0}^7e_i\fx{T(\overline{e_i}x)}{y}_\R\\
			&=&\sum_{i=0}^7e_i\fx{\overline{e_i}Tx+A_{\overline{e_i}}(x,T)}{y}_\R\\
			&=&\fx{Tx}{y}-\sum_{i=1}^7e_i\fx{A_{{e_i}}(x,T)}{y}_\R.
		\end{eqnarray*}
		This proves \eqref{eq:<x,T*y>=<TX,y>+[yTx]}. If $x\in \re H$, then by definition we have  $A_{{e_i}}(x,T)=0$ and thus \eqref{eq:<xTy>=<T*xy>} holds. If $y\in \re H$, noticing that $\re A_{{e_i}}(x,T)=0$, then it follows from \eqref{eq:tensor pd} that $$\fx{A_{{e_i}}(x,T)}{y}_\R=0$$ and thus \eqref{eq:<xTy>=<T*xy>} holds.
		
		We next show that $T^*$ is para-linear. By definition, we need to show that for any $x\in H$ and $p\in\O$, $$\re A_p(x,T^*)=0.$$
		Let $y\in \re H$ be an arbitrary associative element. It follows from \eqref{eq:<x,T*y>=<TX,y>+[yTx]} that
		\begin{eqnarray}\label{pfeq:1}
			\fx{A_p(x,T^*)}{y}&=&\fx{T^*(px)-pT^*x}{y}\\
			&=&\fx{px}{Ty}-p\fx{T^*x}{y}\notag\\
			&=&\fx{px}{Ty}-p\fx{x}{Ty}\notag\\
			&=&A_p(x,Ty).\notag
		\end{eqnarray}
		Suppose $$A_p(x,T^*)=\sum_{i=0}^7e_iz_i$$ with $z_i\in \re H$ for $i=0,\dots, 7$. We claim that 
		$$\re A_p(x,T^*)=z_0=0.$$
		Indeed, since $y\in \re H$, we can conclude from \eqref{pfeq:1} that 
		$$0=\re\fx{A_p(x,T^*)}{y}=\re \fx{\sum_{i=0}^7e_iz_i}{y}=\re\fx{z_0}{y}.$$
		Note that $y$ is  an arbitrarily fixed associative element. This implies that $z_0=0$ as desired.
	\end{proof}


	\section{Octonionic para-linear isometric operator}

	We first give some preliminary  discussions on  the para-linear isometric operators.
	We  introduce the notion of $\O$-isometry on Hilbert $\O$-bimodules as follows.

	\begin{mydef}
		Let $H_1, H_2$ be two Hilbert $\O$-bimodules. A para-linear  operator $T:H_1\to H_2 $ is called an \textbf{isometry} if $T$ satisfies
		$$\fsh{Tx}=\fsh{x}$$ for all $x$.
	\end{mydef}
	
	In contrast to the complex or quaternionic case, an $\O$-para-linear isometry $T$ of Hilbert $\O$-bimodules  may fail to preserve the $\O$-inner product.
	\begin{lemma}\label{lem:isome}
		Let $T:H_1\to H_2 $ be an $\O$-isometry of $\O$-Hilbert spaces.
		Then we have
		\begin{eqnarray}\label{eq:<Tx,Ty>=<xy>+[]}
			\fx{Tx}{Ty}=\fx{x}{y}+\sum_{i=1}^7e_i\fx{ A_{e_i}(x,T)}{Ty}_\R
		\end{eqnarray}
		for all $x,y\in H_1$.
		In particular, for any $x\in \re H_1$, $y\in H_1$, we have
		\begin{eqnarray}\label{eq:<Tx,Ty>=<xy>}
			\fx{Tx}{Ty}=\fx{x}{y}.
		\end{eqnarray}
		
	\end{lemma}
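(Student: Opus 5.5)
The plan is to polarize the norm identity over the real inner product, and then pass to the octonionic inner product through \eqref{eq:<>=sum <>R}.

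First I would note that $T$ is real linear and $\fsh{Tx}=\fsh{x}$. The real inner product $\fx{\cdot}{\cdot}_\R=\re\fx{\cdot}{\cdot}$ is a genuine real inner product whose norm is $\fsh{\cdot}$, since $\fx{u}{u}\in\R^+$. Real polarization therefore gives
$$\fx{Tu}{Tv}_\R=\fx{u}{v}_\R\qquad\text{for all } u,v\in H_1 .$$

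Second, I would expand $\fx{Tx}{Ty}$ using \eqref{eq:<>=sum <>R}:
$$\fx{Tx}{Ty}=\sum_{i=0}^7 e_i\fx{\overline{e_i}\,Tx}{Ty}_\R .$$
Para-linearity of $T$ gives $\overline{e_i}Tx=T(\overline{e_i}x)-A_{\overline{e_i}}(x,T)$. Substituting, each term becomes
$$\fx{T(\overline{e_i}x)}{Ty}_\R-\fx{A_{\overline{e_i}}(x,T)}{Ty}_\R=\fx{\overline{e_i}x}{y}_\R-\fx{A_{\overline{e_i}}(x,T)}{Ty}_\R ,$$
where the first step uses the polarized isometry.

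Summing with the weights $e_i$ and applying \eqref{eq:<>=sum <>R} again gives $\fx{x}{y}$ from the first parts. For the second parts, the $i=0$ term vanishes, because $A_{1}(x,T)=T(x)-T(x)=0$. For $i\ge1$ we have $\overline{e_i}=-e_i$, and $A_p(x,T)$ is real linear in $p$, so $A_{\overline{e_i}}=-A_{e_i}$. The remainder is therefore $+\sum_{i=1}^7 e_i\fx{A_{e_i}(x,T)}{Ty}_\R$, which is exactly \eqref{eq:<Tx,Ty>=<xy>+[]}.

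For the particular case $x\in\re H_1$, Theorem \ref{cor: T re=0 iff T=0}(1) gives $A_{e_i}(x,T)=0$, and \eqref{eq:<Tx,Ty>=<xy>} follows at once. The only delicate point is the sign bookkeeping when replacing $\overline{e_i}$ by $-e_i$; the rest is routine, so I expect no real obstacle.
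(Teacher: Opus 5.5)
Your proposal is correct and follows essentially the same route as the paper: real polarization gives $\fx{Tx}{Ty}_\R=\fx{x}{y}_\R$, and you then expand with \eqref{eq:<>=sum <>R} and rewrite $pTx=T(px)-A_p(x,T)$. The only cosmetic difference is that you carry $\overline{e_i}$ through and convert it to $-e_i$ at the end, whereas the paper writes the $i\ge 1$ terms with $-e_i$ from the start.

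Two small remarks. The rewriting of $\overline{e_i}Tx$ is just the definition of $A_p(x,T)$, not a consequence of para-linearity. Your explicit treatment of the case $x\in\re H_1$ via Theorem \ref{cor: T re=0 iff T=0}(1) fills in a step the paper leaves implicit.
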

	\begin{proof}
		Since $T$ is also an isometry between  real Hilbert spaces $(H_1,\fx{\cdot}{\cdot}_\R)$ and $(H_2,\fx{\cdot}{\cdot}_\R)$, it follows that
		$$\fx{Tx}{Ty}_\R=\fx{x}{y}_\R.$$
		Thus according  to \eqref{eq:<>=sum <>R}, we obtain
		\begin{eqnarray*}
			\fx{Tx}{Ty}&=&\fx{Tx}{Ty}_\R-\sum_{i=1}^7e_i\fx{e_i Tx}{Ty}_\R\\
			&=&\fx{x}{y}_\R-\sum_{i=1}^7e_i\fx{ T(e_ix)-A_{e_i}(x,T)}{Ty}_\R\\
			&=&\fx{x}{y}_\R-\sum_{i=1}^7e_i\fx{ e_ix}{y}_\R+\sum_{i=1}^7e_i\fx{ A_{e_i}(x,T)}{Ty}_\R\\
			&=&\fx{x}{y}+\sum_{i=1}^7e_i\fx{ A_{e_i}(x,T)}{Ty}_\R.
		\end{eqnarray*}
		This proves the lemma.
	\end{proof}
	\begin{rem}\label{rem:preserve inner prod not work}
		In view of   Lemma \ref{lem:isome}, one can see that the condition for an operator $T$ preserving the $\O$-inner product is a very strong condition.
		If an operator $T$ satisfies that $$\fx{Tx}{Ty}=\fx{x}{y}$$ for all $x,y\in H$ and suppose  $T$ is surjective, then \eqref{eq:<Tx,Ty>=<xy>+[]} implies that
		$$A_{e_i}(x,T)=0$$ for each $i=1,\dots,7$, and hence $$A_p(x,T)=0$$ for all $p\in \O$. This means $T$ is an $\O$-linear operator.
	\end{rem}

	\subsection{Octonionic isometric  operators on $\O$-Hilbert spaces}
	In this subsection, we discuss the $\O$-para-linear isometric operators on  $\O$-Hilbert spaces.

	We recall the notion of  orthonormal   bases on $H$. Due to  the non-associativity,  there are three kinds of orthonormal   bases.

	\begin{mydef}
		A subset $S=\{x_{\alpha}\}_{\alpha\in \Lambda}$ of $H$ is said to be an \textbf{orthonormal set} if $$\fx{x_{\alpha}}{x_{\beta}}=\delta_{\alpha \beta}.$$
		Further if $S\subseteq \re {H}$, then  $S$ is said to be an \textbf{associative orthonormal   set}.

		An orthonormal system $S=\{x_{\alpha}\}_{\alpha\in \Lambda}$ is said to be an \textbf{orthonormal basis} if there does not exist other orthonormal system $S'$ such that $S\subsetneqq S'$.
		Further if $S\subseteq \re {H}$, then  $S$ is said to be an \textbf{ associative orthonormal   basis}.
		An orthonormal basis (set) $S=\{x_{\alpha}\}_{\alpha\in \Lambda}$ is said to be a \textbf{weak associative orthonormal basis (set)} if  for all $\alpha,\beta\in \Lambda$,
		$$A_p(x_\alpha,x_\beta)=0.$$
	\end{mydef}

	It is easy to see that every orthonormal   basis of the real Hilbert space $(\re H,\left\langle \cdot,\cdot\right\rangle _{\spr})$ is automatically an associative orthonormal basis of $H$. 
	Due to the  non-associativity, the general  orthonormal bases of an octonionic Hilbert space may lead to some strange phenomena. For example,
	the 	cardinalities of  orthonormal bases of an $\O$-Hilbert space are not unique as shown in \cite{huoqinghai2021tensor}. This leads to the fact of the existence of octonionic Stiefel space $V_2(\O^4)$ \cite{qianchao2022}.
	Moreover, the Parseval theorem is not valid for general orthonormal bases.
	
	It turns out  that  the  notion of weak associative orthonormal basis is  the non-associative counterpart of orthonormal bases of classical case.  As shown in \cite{huoqinghai2021tensor},  the Parseval theorem holds for and only for   weak associative orthonormal base.
	Fortunately, in view of \cite[Lemma 4.8]{huoqinghai2021tensor}   the  cardinality of  weak associative orthonormal bases on an  $\O$-Hilbert space is constant.
	
	\begin{thm}[\textbf{\cite{huoqinghai2021tensor} Parseval theorem}]\label{thm:Parseval thm}
		Let $H$ be an $\O$-Hilbert space and $S=\{x_{\alpha}\}_{\alpha\in \Lambda}$ be a weak associative orthonormal   basis. Then  any $x\in H$  can be  uniquely expressed   as
		\begin{equation*}
			x=\sum_{\alpha\in \Lambda}\fx{x}{x_{\alpha}}x_{\alpha}
		\end{equation*}
		and there holds
		\begin{equation*}
			\fsh{x}^2=\sum_{\alpha\in \Lambda}|\fx{x}{x_{\alpha}}|^2.
		\end{equation*}
	\end{thm}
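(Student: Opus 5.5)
The plan is to reduce everything to the real Hilbert space $(H,\fx{\cdot}{\cdot}_\R)$. I would show that the real family $\{e_ix_\alpha\}_{\alpha\in\Lambda,\,0\le i\le 7}$ is a real orthonormal basis there, and that the real Fourier coefficients of $x$ with respect to it are exactly the real components of the octonions $\fx{x}{x_\alpha}$.

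\textbf{Step 1 (real orthonormality).} For $c,d\in\O$ I claim
$$\fx{cx_\alpha}{dx_\beta}_\R=\re(c\bar d)\,\delta_{\alpha\beta}.$$
\begin{itemize}
\item By para-linearity of $\fx{\cdot}{dx_\beta}$ in the first slot, $\re\fx{cx_\alpha}{dx_\beta}=\re\big(c\fx{x_\alpha}{dx_\beta}\big)$.
\item By hermiticity, $\fx{x_\alpha}{dx_\beta}=\overline{\fx{dx_\beta}{x_\alpha}}$.
\item Since $\fx{dx_\beta}{x_\alpha}=d\,\delta_{\alpha\beta}+A_d(x_\beta,x_\alpha)$ and the weak associativity hypothesis gives $A_d(x_\beta,x_\alpha)=0$, we get $\fx{x_\alpha}{dx_\beta}=\bar d\,\delta_{\alpha\beta}$.
\end{itemize}
Combining these gives the claim. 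In particular $\{e_ix_\alpha\}$ is real orthonormal, and the real-linear map $(c_\alpha)\mapsto\sum c_\alpha x_\alpha$ from $\ell^2(\Lambda,\O)$ into $H$ is a real isometry. This gives convergence of the series and the uniqueness of the expansion.

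\textbf{Step 2 (coefficients).} Write $c_\alpha=\fx{x}{x_\alpha}$. By \eqref{eq:<u,pv>=<u,v>p^+Ap(uv)},
$$\fx{x}{e_ix_\alpha}=\fx{x}{x_\alpha}\overline{e_i}+A_{e_i}(x,x_\alpha).$$
Since $\re A_p(x,y)=0$ by para-linearity, taking real parts gives $\fx{x}{e_ix_\alpha}_\R=\re(c_\alpha\overline{e_i})$, the $i$-th real component of $c_\alpha$. Hence $\sum_\alpha c_\alpha x_\alpha=\sum_{\alpha,i}\fx{x}{e_ix_\alpha}_\R\,e_ix_\alpha$ is the real orthogonal projection $Px$ of $x$ onto the closed real span $K$ of $\{e_ix_\alpha\}$. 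Real Bessel then gives $\fsh{Px}^2=\sum_\alpha|c_\alpha|^2$.

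\textbf{Step 3 (completeness).} Suppose $z=x-Px\neq0$. Then $z$ is real-orthogonal to every $e_ix_\alpha$. By Step 2 this means every real component of $\fx{z}{x_\alpha}$ vanishes, so $\fx{z}{x_\alpha}=0$ for all $\alpha$. Then $S\cup\{z/\fsh{z}\}$ is an orthonormal system strictly containing $S$, which contradicts maximality. So $x=\sum_\alpha\fx{x}{x_\alpha}x_\alpha$, and since $P=\mathrm{Id}$, also $\fsh{x}^2=\sum_\alpha|\fx{x}{x_\alpha}|^2$.

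The main obstacle is Step 1. A priori $\fx{cx_\alpha}{dx_\beta}$ carries associator terms (cf.\ \eqref{eq:<pu,pv>in lemma}), and the point is to pass to real parts early, so that only the weak associativity condition $A_d(x_\beta,x_\alpha)=0$ is needed. One never has to control the full octonionic inner product.
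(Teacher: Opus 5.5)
Your proof is correct. The paper does not prove this statement; it imports it from \cite{huoqinghai2021tensor}. The way the paper later relies on that proof indicates the classical route. It cites the finite identity \eqref{eq:bessel} for weak associative orthonormal sets, and remarks that only countably many coefficients $\fx{x}{x_\alpha}$ are nonzero. So the cited argument presumably proves the octonionic Bessel identity \eqref{eq:bessel} by expanding the $\O$-inner product and using weak associativity to cancel the associator terms. It then deduces countability of the nonzero coefficients and Cauchy convergence of partial sums, and finally shows the remainder is orthogonal to every $x_\alpha$, contradicting maximality.

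You instead pass to the real Hilbert space $(H,\fx{\cdot}{\cdot}_\R)$ at the outset. Weak associativity enters exactly once, in Step 1, where it kills the cross term $\fx{e_ix_\alpha}{e_jx_\beta}_\R=\re\big(e_i\overline{A_{e_j}(x_\beta,x_\alpha)}\big)$ for $\alpha\neq\beta$. The identity $\fx{x}{e_ix_\alpha}_\R=\re\big(\fx{x}{x_\alpha}\overline{e_i}\big)$ that drives Steps 2 and 3 needs only para-linearity and hermiticity, and holds for any orthonormal set.

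What your route buys:
\begin{itemize}
\item Convergence over an arbitrary index set, the norm identity, and completeness all follow from the real projection theorem. You never have to control the full octonionic inner product of two infinite sums.
\item It shows transparently why Parseval fails for general orthonormal bases: the family $\{e_ix_\alpha\}$ need not be real orthogonal.
\item Nothing is lost relative to the classical route. Apply Steps 1--2 to a finite weak associative orthonormal set $\{\xi_i\}_{i=1}^N$: then $\sum_i\fx{y}{\xi_i}\xi_i$ is the real orthogonal projection of $y$ onto the real span of $\{e_j\xi_i\}$, and \eqref{eq:bessel} is just the real Pythagorean identity.
\end{itemize}

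One small tightening for uniqueness. Suppose $x=\sum_\alpha c_\alpha x_\alpha$ for some convergent family. Pairing with $e_ix_\beta$ in $\fx{\cdot}{\cdot}_\R$ and using Step 1 gives $\re(c_\beta\overline{e_i})=\fx{x}{e_ix_\beta}_\R$. Step 2 then forces $c_\beta=\fx{x}{x_\beta}$, without needing to know in advance that $(c_\alpha)$ is square-summable.
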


	
	\bigskip

	Let $H$ be a Hilbert $\O$-bimodule in this subsection. 
	Let $\Lambda $ be an index set such that its cardinality  is equal to that of  an arbitrary weak associative orthonormal basis of $H$. We have the following characterization of  octonionic isometric isomorphism.
	
	\begin{thm}\label{thm:equv isome}
		Let  $T:H\to H$ be a para-linear operator. Then the following are equivalent.
		\begin{enumerate}
			\item\label{it:T isom} $T$ is an  isometric isomorphism.
			\item\label{it:T map aonb to wonb} For any associative orthonormal basis $\{\epsilon_\alpha\}_{\alpha\in \Lambda}$ of $H$,
			$\{T\epsilon_\alpha\}_{\alpha\in \Lambda}$  is a weak associative orthonormal basis of $H$.
			\item  \label{it:T=Txe} For any associative orthonormal basis $\epsilon=\{\epsilon_\alpha\}_{\alpha\in \Lambda}$, there exists a unique weak associative  orthonormal basis $\xi=\{\xi_\alpha\}_{\alpha\in \Lambda}$ such that  
			$$Tx=\sum_{\alpha\in \Lambda} \fx{{x}}{{\xi_\alpha}}\epsilon_\alpha.$$
			\item \label{it:4 aonb imply wonb}There exists an associative orthonormal basis $\epsilon=\{\epsilon_\alpha\}_{\alpha\in \Lambda}$, such that 	$\{T\epsilon_\alpha\}_{\alpha\in \Lambda}$  is a weak associative orthonormal basis of $H$.
			
			\item\label{it:TT*=I}  $T^*\circ T=T\circ T^*=\text{Id}$. Here the notation ``$\circ$'' stands for the ordinary composition.
		\end{enumerate}
		Moreover, if $T$ is an $\O$-para-linear isometric isomorphism, then   
		\begin{eqnarray}\label{eq:T*T=I}
			T^*\circledcirc T=T\circledcirc T^*=\text{Id}.
		\end{eqnarray} Here the notation ``$\circledcirc$'' stands for the regular composition.
		
	\end{thm}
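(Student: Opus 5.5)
I will prove the cycle (1)$\Rightarrow$(2)$\Rightarrow$(4)$\Rightarrow$(1), then (1)$\Leftrightarrow$(5) and (2)$\Leftrightarrow$(3), and finally the regular-composition identity \eqref{eq:T*T=I}.

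For (1)$\Rightarrow$(2), fix an associative orthonormal basis $\{\epsilon_\alpha\}$, so $\epsilon_\alpha\in\re H$. By \eqref{eq:<Tx,Ty>=<xy>} of Lemma \ref{lem:isome}, $\fx{T\epsilon_\alpha}{Ty}=\fx{\epsilon_\alpha}{y}$ for every $y$. Taking $y=\epsilon_\beta$ shows that $\{T\epsilon_\alpha\}$ is orthonormal. For weak associativity, take $y=p\epsilon_\beta$. Since $\epsilon_\beta$ is real, $A_p(\epsilon_\beta,T)=0$ by Theorem \ref{cor: T re=0 iff T=0}, so $Ty=pT\epsilon_\beta$. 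Then
\[
\fx{T\epsilon_\alpha}{pT\epsilon_\beta}=\fx{\epsilon_\alpha}{p\epsilon_\beta}=\delta_{\alpha\beta}\overline p .
\]
By \eqref{eq:<u,pv>=<u,v>p^+Ap(uv)} this gives $A_p(T\epsilon_\alpha,T\epsilon_\beta)=0$, and then \eqref{eq:Ap(u,v)=-Ap(v,u)} gives the other order. Maximality comes from surjectivity. If $z\perp T\epsilon_\alpha$ for all $\alpha$, write $z=Tw$ and $w=\sum_\beta\fx{w}{\epsilon_\beta}\epsilon_\beta$ (Parseval, Theorem \ref{thm:Parseval thm}, since an associative basis is trivially weak associative). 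Then $\fx{\epsilon_\alpha}{w}=\fx{T\epsilon_\alpha}{Tw}=0$, hence $w=0$ and $z=0$. The implication (2)$\Rightarrow$(4) is trivial.

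For (4)$\Rightarrow$(1), fix $\epsilon$ with $\xi_\alpha:=T\epsilon_\alpha$ weak associative. Write $x=\sum_\alpha e_i c_{i\alpha}\epsilon_\alpha$, i.e.\ $x=\sum_\alpha a_\alpha\epsilon_\alpha$ with $a_\alpha\in\O$. The key point is that $T(a\epsilon_\alpha)=aT\epsilon_\alpha$ because $\epsilon_\alpha$ is real (Theorem \ref{cor: T re=0 iff T=0}(1)), so by real linearity and continuity $Tx=\sum a_\alpha\xi_\alpha$. Weak associativity of $\{\xi_\alpha\}$ gives $\fx{a_\alpha\xi_\alpha}{a_\beta\xi_\beta}$ with real part $\delta_{\alpha\beta}|a_\alpha|^2$, using Lemma \ref{lem:the second associator of H 's prop}(3). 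Hence $\fsh{Tx}^2=\sum|a_\alpha|^2=\fsh{x}^2$. Surjectivity follows from Parseval for $\xi$: any $z$ equals $\sum\fx{z}{\xi_\alpha}\xi_\alpha=T\big(\sum\fx{z}{\xi_\alpha}\epsilon_\alpha\big)$.

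For (1)$\Leftrightarrow$(5), regard $T$ as a real-linear operator on the real Hilbert space $(H,\fx{\cdot}{\cdot}_\R)$. There, a bijective isometry is exactly a real unitary, so $T^*T=TT^*=\mathrm{Id}$, and conversely. For (3), I apply (2) to $T^*$ (which is para-linear by Lemma \ref{lem:T* is paralin} and satisfies (1) by (5)) and set $\xi_\alpha:=T^*\epsilon_\alpha$. By \eqref{eq:<xTy>=<T*xy>}, $\fx{Tx}{\epsilon_\alpha}=\fx{x}{T^*\epsilon_\alpha}$, and Parseval for $\epsilon$ gives the expansion. Uniqueness holds because $\fx{x}{\xi_\alpha}=\fx{Tx}{\epsilon_\alpha}$ determines $\xi_\alpha$. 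Conversely, from (3) one gets $\xi_\alpha=T^*\epsilon_\alpha$, so $T^*$ satisfies (4); hence $T^*$ is an isometric isomorphism, and so is $T$ by (5).

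Finally, \eqref{eq:T*T=I}: both sides are para-linear, so by Theorem \ref{cor: T re=0 iff T=0}(2) it suffices to check them on $\re H$. On $\re H$, Lemma \ref{lem:vanishing[f,g,x]=0}(2) gives $T^*\circledcirc T=T^*\circ T=\mathrm{Id}$, and likewise for $T\circledcirc T^*$. I expect the main obstacle to be the (4)$\Rightarrow$(1) expansion. There I must justify passing from real-linearity to the infinite sum and verify the cross terms $\re\fx{a\xi_\alpha}{b\xi_\beta}$ using the third identity of Lemma \ref{lem:the second associator of H 's prop}. The associator term $\fx{[a,\overline b\,\cdot,\xi_\beta]}{\xi_\alpha}$ might not vanish outright, and only its real part should drop out.
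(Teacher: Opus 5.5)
Your proof is correct. For (1)$\Rightarrow$(2), (2)$\Rightarrow$(4), (1)$\Leftrightarrow$(5) and the regular-composition identity, you follow the paper's argument up to cosmetic changes. Your maximality step, writing $z=Tw$ and using $\fx{\epsilon_\alpha}{w}=\fx{T\epsilon_\alpha}{z}=0$, is a bit cleaner than the paper's expansion of an arbitrary $y=Tx$.

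The genuine difference is in (2)$\Leftrightarrow$(3). The paper builds $\xi_\alpha=\sum_\beta\fx{\epsilon_\alpha}{T\epsilon_\beta}\epsilon_\beta$ by hand. It checks orthonormality, maximality and weak associativity of $\{\xi_\alpha\}$ via Parseval for $\{T\epsilon_\beta\}$. It then proves (3)$\Rightarrow$(1) directly, using Parseval for $\xi$ and the explicit preimage $\sum_\alpha\fx{y}{\epsilon_\alpha}\xi_\alpha$. You instead recognize $\xi_\alpha=T^*\epsilon_\alpha$. This is exactly the paper's vector, since $\fx{T^*\epsilon_\alpha}{\epsilon_\beta}=\fx{\epsilon_\alpha}{T\epsilon_\beta}$ by \eqref{eq:<xTy>=<T*xy>}. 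You then obtain its properties, and the converse, by running the implications already proved on the para-linear operator $T^*$ (Lemma \ref{lem:T* is paralin}) together with (1)$\Leftrightarrow$(5). Your route is shorter and explains what $\xi$ is. The paper's route is self-contained and does not use (5) or the real-adjoint theory.

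The obstacle you flag in (4)$\Rightarrow$(1) resolves. By para-linearity of $\fx{\cdot}{b\xi_\beta}$ and the second identity of Lemma \ref{lem:the second associator of H 's prop},
\[
\re\fx{a\xi_\alpha}{b\xi_\beta}=\re\big(a\fx{\xi_\alpha}{b\xi_\beta}\big)=\re\big(a(\delta_{\alpha\beta}\overline{b}+A_b(\xi_\alpha,\xi_\beta))\big)=\delta_{\alpha\beta}\re(a\overline{b}),
\]
and this suffices because $\fsh{Tx}^2$ is real. You are right that the octonion-valued cross term itself need not equal $\delta_{\alpha\beta}a\overline{b}$. For example, in $H=\O$ with $\xi=e_2$, $a=e_1$, $b=-e_4$, one gets $\fx{a\xi}{b\xi}=-e_5$ while $a\overline{b}=e_5$.

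The paper avoids this expansion altogether. It notes $\fx{Tx}{T\epsilon_\beta}=\fx{x}{\epsilon_\beta}$, which follows from $\fx{a\xi_\alpha}{\xi_\beta}=a\delta_{\alpha\beta}$, and applies Parseval to $\{T\epsilon_\alpha\}$.

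Finally, passing $T$ through $\sum_\alpha a_\alpha\epsilon_\alpha$ requires $T$ to be bounded. The paper assumes this tacitly too, so treat it as a standing hypothesis rather than a step you need to justify.
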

	\begin{proof}
		We prove \eqref{it:T isom} implies \eqref{it:T map aonb to wonb}.
		Let $\epsilon=\{\epsilon_\alpha\}_{\alpha\in \Lambda}$ be an arbitrary	associative orthonormal basis. We claim $\left(\{T\epsilon_\alpha\}_{\alpha\in \Lambda}\right)^{\perp}=0.$
		Since $T$ is surjective, it follows that for any $y\in H$, there exists $x\in H$ such that $y=Tx$. According to the Parseval theorem, we get $$x=\sum_{\alpha\in \Lambda}\fx{x}{\epsilon_\alpha}\epsilon_\alpha.$$
		Hence we conclude from Theorem \ref{cor: T re=0 iff T=0} that
		$$y=\sum_{\alpha\in \Lambda}T(\fx{x}{\epsilon_\alpha}\epsilon_\alpha)=\sum_{\alpha\in \Lambda}\fx{x}{\epsilon_\alpha}T(\epsilon_\alpha).$$
		Let $z\in \left(\{T\epsilon_\alpha\}_{\alpha\in \Lambda}\right)^{\perp}$. Then for all $y\in H$,
		$$\fx{z}{y}_\R=\re\fx{z}{\sum_{\alpha\in \Lambda}\fx{x}{\epsilon_\alpha}T(\epsilon_\alpha)}=\sum_{\alpha\in \Lambda}\re\left(\fx{z}{T(\epsilon_\alpha)}\overline{\fx{x}{\epsilon_\alpha}}\right)=0.$$
		This implies $z=0$ as desired.
		
		It follows from Lemma \ref{lem:isome} that
		$$\fx{T\epsilon_\alpha}{T\epsilon_\beta}=\fx{\epsilon_\alpha}{\epsilon_\beta}.$$
		Hence  $\{T\epsilon_\alpha\}_{\alpha\in \Lambda}$ is an orthonormal basis.
		Similarly, for any $\alpha\neq\beta$ and any $p\in \O$, it follows from Theorem \ref{cor: T re=0 iff T=0}  and Lemma \ref{lem:isome} that
		$$A_p(T\epsilon_\alpha,T\epsilon_\beta)=\fx{pT\epsilon_\alpha}{T\epsilon_\beta}-p\fx{T\epsilon_\alpha}{T\epsilon_\beta}=\fx{T(p\epsilon_\alpha)}{T\epsilon_\beta}=\fx{p\epsilon_\alpha}{\epsilon_\beta}=0.$$
		This proves that $\{T\epsilon_\alpha\}_{\alpha\in \Lambda}$ is a weak associative orthonormal basis.
		
		We next prove \eqref{it:T map aonb to wonb} implies \eqref{it:T=Txe}.
		We first show the uniqueness. If there is another basis $\{\xi'_\alpha\}_{\alpha\in\Lambda}$ such that 
		$$Tx=\sum_{\alpha\in \Lambda} \fx{{x}}{{\xi'_\alpha}}\epsilon_\alpha,$$
		then we have 
		$$\sum_{\alpha\in \Lambda} \fx{{x}}{{\xi'_\alpha}}\epsilon_\alpha=\sum_{\alpha\in \Lambda} \fx{{x}}{{\xi_\alpha}}\epsilon_\alpha$$
		for all $x\in H$.
		This implies that 
		$$\fx{{x}}{{\xi'_\alpha}}=\fx{{x}}{{\xi_\alpha}}$$
		for all $x\in H$ and $\alpha\in \Lambda$. Hence $\xi'_\alpha=\xi_\alpha$ for all $\alpha\in \Lambda$.
		
		By  assertion   \eqref{it:T map aonb to wonb}, it follows that $\{T\epsilon_\beta\}_{\beta\in \Lambda}$ is a  weak associative  orthonormal basis. In view of the Parseval theorem  , we conclude that
		$$\sum_{\beta\in \Lambda}\abs{\fx{\epsilon_{\alpha}}{T\epsilon_{\beta}}}^2=\fsh{\epsilon_{\alpha}}^2=1.$$
		Hence the series $$\sum_{\beta\in \Lambda}\fx{\epsilon_{\alpha}}{T\epsilon_{\beta}}\epsilon_{\beta}$$ converses absolutely.
		We define 	$$\xi_\alpha=\sum_{\beta\in \Lambda}\fx{\epsilon_{\alpha}}{T\epsilon_{\beta}}\epsilon_{\beta}.$$
		Note that the second associator $A_p(u,v)$ of $H$ vanishes if $u$ or $v$ is an associative element.
		It follows from identities \eqref{eq:<u,pv>=<u,v>p^+Ap(uv)} and \eqref{eq:<pu,pv>in lemma} that
		\begin{eqnarray*}
			\fx{\xi_\alpha}{\xi_\gamma}&=&\fx{\sum_{\beta\in \Lambda}\fx{\epsilon_{\alpha}}{T\epsilon_{\beta}}\epsilon_{\beta}}{\sum_{\beta\in \Lambda}\fx{\epsilon_{\gamma}}{T\epsilon_{\beta}}\epsilon_{\beta}}\\
			&=&\sum_{\beta\in \Lambda}\fx{\epsilon_{\alpha}}{T\epsilon_{\beta}}\overline{\fx{\epsilon_{\gamma}}{T\epsilon_{\beta}}}\\	&=&\sum_{\beta\in \Lambda}\fx{\epsilon_{\alpha}}{\fx{\epsilon_{\gamma}}{T\epsilon_{\beta}}T\epsilon_{\beta}}-A_{\fx{\epsilon_{\gamma}}{T\epsilon_{\beta}}}(\epsilon_{\alpha},T\epsilon_{\beta})\\
			&=&\sum_{\beta\in \Lambda}\fx{\epsilon_{\alpha}}{\fx{\epsilon_{\gamma}}{T\epsilon_{\beta}}T\epsilon_{\beta}}.
		\end{eqnarray*}
		Note again that $\{T\epsilon_\beta\}_{\beta\in \Lambda}$ is a  weak associative  orthonormal basis by assertion  \eqref{it:T map aonb to wonb}.  We thus conclude from the Parseval theorem  \ref{thm:Parseval thm} that
		$$	\fx{\xi_\alpha}{\xi_\gamma}=\fx{\epsilon_\alpha}{\epsilon_\gamma}=\delta_{\alpha \gamma}.$$
		This proves that  $\xi=\{\xi_\alpha\}_{\alpha\in \Lambda}$ is an orthonormal system.
		For any $z\in( \{\xi_\alpha\}_{\alpha\in \Lambda})^\perp$, it follows from Lemma \ref{lem:the second associator of H 's prop} and the fact that  $\epsilon_\beta$ is associative element that
		\begin{eqnarray*}
			0&=&\fx{z}{\xi_\alpha}\\
			&=&\fx{z}{\sum_{\beta\in \Lambda}\fx{\epsilon_\alpha}{T\epsilon_\beta}\epsilon_\beta}\\
			&=&\sum_{\beta\in \Lambda}\fx{z}{\epsilon_\beta}\overline{\fx{\epsilon_\alpha}{T\epsilon_\beta}}\\
			&=&\sum_{\beta\in \Lambda}\fx{z}{\epsilon_\beta}\fx{T\epsilon_\beta}{\epsilon_\alpha}\\
			&=&\fx{\sum_{\beta\in \Lambda}\fx{z}{\epsilon_\beta}T\epsilon_\beta}{\epsilon_\alpha}.
		\end{eqnarray*}
		Since $\{\epsilon_{\alpha}\}_{\alpha\in \Lambda}$ is an associative orthonormal basis. It follows  that 
		$$\sum_{\beta\in \Lambda}\fx{z}{\epsilon_\beta}T\epsilon_\beta=0.$$
		Note that $\{T\epsilon_{\alpha}\}_{\alpha\in \Lambda}$ is a weak associative orthonormal basis.
		This implies that $$\fx{z}{\epsilon_\beta}=0$$ for all $\beta\in \Lambda$. Thus we get that $z=0$.
		This shows that $\xi=\{\xi_\alpha\}_{\alpha\in \Lambda}$ is an orthonormal basis.
		Similarly, for any $p\in \O$ and $\alpha\neq \gamma$, we have
		\begin{eqnarray*}
			A_p({\xi_\alpha},{\xi_\gamma})
			&=&\fx{\sum_{\beta\in \Lambda}\fx{p\epsilon_{\alpha}}{T\epsilon_{\beta}}\epsilon_{\beta}}{\sum_{\beta\in \Lambda}\fx{\epsilon_{\gamma}}{T\epsilon_{\beta}}\epsilon_{\beta}}\\
			&=&\sum_{\beta\in \Lambda}\fx{p\epsilon_{\alpha}}{T\epsilon_{\beta}}\overline{\fx{\epsilon_{\gamma}}{T\epsilon_{\beta}}}\\
			&=&\sum_{\beta\in \Lambda}\fx{p\epsilon_{\alpha}}{T\epsilon_{\beta}}\fx{T\epsilon_{\beta}}{\epsilon_{\gamma}}\\
			&=&	\sum_{\beta\in \Lambda}\fx{\fx{p\epsilon_{\alpha}}{T\epsilon_{\beta}}T\epsilon_{\beta}}{\epsilon_{\gamma}}\\
			&=&\fx{p\epsilon_{\alpha}}{\epsilon_{\gamma}}\\
			&=&0.
		\end{eqnarray*}
		This proves that  $\xi=\{\xi_\alpha\}_{\alpha\in \Lambda}$ is a weak associative orthonormal basis.
		
		Define 
		\begin{eqnarray}
			S:H&\to& H\notag\\
			y&\mapsto &\sum_{\alpha\in \Lambda} \fx{{y}}{{\xi_\alpha}}\epsilon_\alpha.\label{eqdef:Txiep}
		\end{eqnarray}
		It is easy to check that $S$ is a para-linear operator. To prove that $T=S$, in view of  Theorem \ref{cor: T re=0 iff T=0},    we only need to check the equality for an associative orthonormal basis.
		For any $\gamma\in \Lambda$, we have
		\begin{eqnarray*}
			S(\epsilon_\gamma)&=&\sum_{\alpha\in \Lambda} \fx{\epsilon_\gamma}{{\sum_{\beta\in \Lambda}\fx{\epsilon_{\alpha}}{T\epsilon_{\beta}}\epsilon_{\beta}}}\epsilon_\alpha\\
			&=&\sum_{\alpha\in \Lambda} \overline{{\fx{\epsilon_{\alpha}}{T\epsilon_{\gamma}}}}\epsilon_\alpha\\
			&=&\sum_{\alpha\in \Lambda} {\fx{T\epsilon_{\gamma}}{\epsilon_{\alpha}}}\epsilon_\alpha\\
			&=&T\epsilon_\gamma.
		\end{eqnarray*}
		This proves assertion \eqref{it:T=Txe}.

		Next, we prove that  \eqref{it:T=Txe} implies \eqref{it:T isom}.
		Choose an arbitrary  associative orthonormal basis $\epsilon=\{\epsilon_\alpha\}_{\alpha\in \Lambda}$.  By assertion \eqref{it:T=Txe}, there exists a unique weak associative  orthonormal basis $\xi=\{\xi_\alpha\}_{\alpha\in \Lambda}$ such that
		$Tx=\sum_{\alpha\in \Lambda} \fx{{x}}{{\xi_\alpha}}\epsilon_\alpha$.
		For any $x\in H$, by direct calculations  we have
		\begin{eqnarray*}
			\fsh{Tx}^2
			&=&\fx{\sum_{\alpha\in \Lambda} \fx{{x}}{{\xi_\alpha}}\epsilon_\alpha}{\sum_{\beta\in \Lambda} \fx{{x}}{{\xi_\beta}}\epsilon_\beta}\\
			&=&\sum_{\alpha\in \Lambda} \abs{\fx{{x}}{{\xi_\alpha}}}^2.
		\end{eqnarray*}
		Note that $\xi=\{\xi_\alpha\}_{\alpha\in \Lambda}$ is a weak associative  orthonormal basis. Hence it follows from the Parseval theorem  \ref{thm:Parseval thm} that
		$$\fsh{Tx}=\fsh{x}.$$
		Note that an isometry is always injective. We show that  $T$ is surjective.
		For any $y\in H$, set $$x=\sum_{\alpha\in \Lambda} \fx{y}{\epsilon_\alpha}\xi_\alpha.$$
		Then $$Tx=\sum_{\alpha\in \Lambda}\fx{x}{\xi_\alpha}\epsilon_\alpha=\sum_{\alpha\in \Lambda}\fx{y}{\epsilon_\alpha}\epsilon_\alpha=y.$$
		This proves that $T$ is surjective.
		
		Now we have shown that  \eqref{it:T isom}, \eqref{it:T map aonb to wonb}
		and \eqref{it:T=Txe}  are equivalent to each other. It is easy to see that \eqref{it:T map aonb to wonb} implies \eqref{it:4 aonb imply wonb}. We show that \eqref{it:4 aonb imply wonb} implies \eqref{it:T isom}. Let $\epsilon=\{\epsilon_\alpha\}_{\alpha\in \Lambda}$ be an 	associative orthonormal basis such that $\{T\epsilon_\alpha\}_{\alpha\in \Lambda}$  is a weak associative orthonormal basis of $H$.
		Since $T$ is para-linear, for any $x=\sum_{\alpha\in \Lambda}\fx{x}{\epsilon_{\alpha}}\epsilon_{\alpha}$, it follows that 
		$$Tx=\sum_{\alpha\in \Lambda}\fx{x}{\epsilon_{\alpha}}T\epsilon_{\alpha}.$$
		Note that $\{T\epsilon_\alpha\}_{\alpha\in \Lambda}$  is a weak associative orthonormal basis of $H$.  It is easy to prove that $T$ is surjective. We conclude from the Parseval theorem \ref{thm:Parseval thm} that
		$$\fsh{Tx}^2=\sum_{\alpha\in \Lambda}\abs{\fx{x}{\epsilon_{\alpha}}}^2=\fsh{x}^2$$ for all $x\in H$. This shows \eqref{it:T isom}.

		It is a classical result that  \eqref{it:TT*=I} is equivalent to \eqref{it:T isom}.
		
		We finally prove that \eqref{it:TT*=I} implies \eqref{eq:T*T=I}. Note that $T^*$ is para-linear. In view of Theorem \ref{cor: T re=0 iff T=0}, to obtain \eqref{eq:T*T=I}, it is sufficient to show that 
		$$	(T^*\circledcirc T)(x)=(T\circledcirc T^*)(x)=\text{Id}x$$
		for any $x\in \re H$. By Lemma \ref{lem:vanishing[f,g,x]=0}, this is equivalent to showing  
		$$	(T^*\circ T)(x)=(T\circ T^*)(x)=x$$ for any $x\in \re H$, which holds obviously by \eqref{it:TT*=I}. 
		This completes the proof.
	\end{proof}
	
	\begin{rem}
		
		This theorem shows that each para-linear isometric isomorphism corresponds to a unique weak associative orthonormal basis up to the choice of an associative orthonormal basis. Conversely, for each weak associative orthonormal basis $\{\xi_\alpha\}_{\alpha\in \Lambda}$, we can obtain two  para-linear isometric isomorphisms defined by 
		$$T_1x=\sum_{\alpha\in \Lambda} \fx{{x}}{{\xi_\alpha}}\epsilon_\alpha, \qquad T_2x=\sum_{\alpha\in \Lambda} \fx{{x}}{{\epsilon_\alpha}}\xi_\alpha.$$ 
		Here  $\{\epsilon_\alpha\}_{\alpha\in \Lambda}$ is an arbitrary associative orthonormal basis.
	\end{rem}
	
	We next discuss partial isometric isomorphism.
	Let $H$ be a Hilbert $\O$-bimodule and $T$ be a para-linear operator on $H$. $T$ is called a \textbf{partial isometry} if $T|_{(\ker T)^\perp}$ is an isometry. 
	
	\begin{thm}\label{thm:p.iso}
		Let $T$ be a para-linear operator on $H$. Suppose $\ker T$ is an $\O$-submodule of $H$.
		Then the following are equivalent.
		\begin{enumerate}
			\item \label{it: T is p.iso}$T$ is a partial isometry.
			\item \label{it: T AONS =WONS} For any 	associative orthonormal  set $\{\epsilon_\alpha\}_{\alpha\in \Lambda}$ in $(\ker T)^\perp$, $\{T\epsilon_\alpha\}_{\alpha\in \Lambda}$ is a weak associative orthonormal  set of $H$.
			\item \label{it:'T AONS=WONS}There exists an 	associative orthonormal  basis $\{\epsilon_\alpha\}_{\alpha\in \Lambda}$ of Hilbert $\O$-submodule $(\ker T)^\perp$, such that $\{T\epsilon_\alpha\}_{\alpha\in \Lambda}$ is a weak associative orthonormal  set of $H$.
			
		\end{enumerate}
		
	\end{thm}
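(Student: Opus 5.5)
We prove the cycle \eqref{it: T is p.iso} $\Rightarrow$ \eqref{it: T AONS =WONS} $\Rightarrow$ \eqref{it:'T AONS=WONS} $\Rightarrow$ \eqref{it: T is p.iso}, imitating the proof of Theorem \ref{thm:equv isome} on the submodule $K:=(\ker T)^\perp$.

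\textbf{Preliminary step.} Since $\ker T$ is an $\O$-submodule, we first check that $K$ is a closed $\O$-submodule, in fact a Hilbert $\O$-bimodule, with $\re K=K\cap\re H$. If $z\in K$ and $w\in\ker T$, then $\fx{pz}{w}_\R=\re\fx{pz}{w}$. Using $\fx{pz}{w}=p\fx{z}{w}+A_p(z,w)$ together with $\eqref{eq:<>=sum <>R}$, orthogonality of $z$ to all of $\O\ker T$ shows that $\fx{z}{w}=0$ for the full $\O$-valued product. Hence $A_p(z,w)=\fx{pz}{w}$, and its real part equals $\fx{z}{\bar p w}_\R=0$ by the bimodule structure of $\ker T$. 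Running through $e_i$ gives $pz\perp w$. We also need that the decomposition \eqref{eq:reM=huaaM=435} of an element of $K$ has components in $K$. For this we use the explicit formula $\re x=\frac5{12}x-\frac1{12}\sum e_ixe_i$, which requires $K$ to be closed under right multiplication as well; this follows from the same argument applied to right scalars. We also note $T|_{\re H}$-compatibility: for $x\in\re K$, Theorem \ref{cor: T re=0 iff T=0} gives $T(px)=pTx$. We expect this structural step, especially showing that $K$ is an $\O$-bimodule with associative orthonormal bases, to be the main obstacle.

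\textbf{\eqref{it: T is p.iso} $\Rightarrow$ \eqref{it: T AONS =WONS}.} Let $\{\epsilon_\alpha\}\subseteq\re K$ be orthonormal. The restriction $T|_K:K\to H$ is a para-linear isometry, so Lemma \ref{lem:isome}, applied with $H_1=K$, gives $\fx{T\epsilon_\alpha}{T\epsilon_\beta}=\fx{\epsilon_\alpha}{\epsilon_\beta}=\delta_{\alpha\beta}$. For the associators, since $\epsilon_\alpha$ is associative and $p\epsilon_\alpha\in K$,
$$A_p(T\epsilon_\alpha,T\epsilon_\beta)=\fx{T(p\epsilon_\alpha)}{T\epsilon_\beta}-p\fx{T\epsilon_\alpha}{T\epsilon_\beta}=\fx{p\epsilon_\alpha}{\epsilon_\beta}-p\delta_{\alpha\beta}=A_p(\epsilon_\alpha,\epsilon_\beta)=0,$$
again by Lemma \ref{lem:isome} (with $\epsilon_\beta\in\re K$ in the second slot, using \eqref{eq:Ap(u,v)=-Ap(v,u)} and \eqref{eq:<Tx,Ty>=<xy>} with roles swapped). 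Thus $\{T\epsilon_\alpha\}$ is a weak associative orthonormal set. No maximality is required here, which is why the statement says ``set'' rather than ``basis''.

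\textbf{\eqref{it: T AONS =WONS} $\Rightarrow$ \eqref{it:'T AONS=WONS}.} This is trivial: take any orthonormal basis of the real Hilbert space $\re K$, which is an associative orthonormal basis of $K$.

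\textbf{\eqref{it:'T AONS=WONS} $\Rightarrow$ \eqref{it: T is p.iso}.} Let $x\in K$. By Parseval (Theorem \ref{thm:Parseval thm}) in $K$, $x=\sum_\alpha\fx{x}{\epsilon_\alpha}\epsilon_\alpha$. Since the $\epsilon_\alpha$ are associative and $T$ is continuous, Theorem \ref{cor: T re=0 iff T=0} gives $Tx=\sum_\alpha\fx{x}{\epsilon_\alpha}T\epsilon_\alpha$. Now $\{T\epsilon_\alpha\}$ is a weak associative orthonormal set, so $\fsh{\sum c_\alpha T\epsilon_\alpha}^2=\sum|c_\alpha|^2$. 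This holds because, using \eqref{eq:<pu,pv>in lemma}, cross terms $\fx{c_\alpha T\epsilon_\alpha}{c_\beta T\epsilon_\beta}$ reduce to $(c_\alpha\delta_{\alpha\beta})\bar c_\beta$ plus associator terms, which vanish by the weak associativity $A_q(T\epsilon_\alpha,T\epsilon_\beta)=0$; for the term $\fx{[c_\alpha,\bar c_\beta,\cdot]}{\cdot}$, we argue by taking real parts, since the norm is real. Hence $\fsh{Tx}^2=\sum|\fx{x}{\epsilon_\alpha}|^2=\fsh{x}^2$, so $T|_K$ is an isometry.
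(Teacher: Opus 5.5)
Your proposal is correct and follows essentially the paper's route. It uses the same cycle: (1)$\Rightarrow$(2) rests on preservation of $\fx{\cdot}{\cdot}$ when one argument is associative, which you get from Lemma \ref{lem:isome} applied to $T|_K$ and the paper re-derives by polarization. (3)$\Rightarrow$(1) rests on $Tx=\sum_\alpha\fx{x}{\epsilon_\alpha}T\epsilon_\alpha$ together with a Pythagorean identity for the weak associative orthonormal set $\{T\epsilon_\alpha\}$; the paper imports this as the Bessel identity \eqref{eq:bessel}, while you derive it directly.

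The one step you leave vague, the vanishing of $\re\fx{[c_\alpha,c_\beta,T\epsilon_\beta]}{T\epsilon_\alpha}$, follows at once from $\re\fx{[p,q,v]}{u}=-\re\bigl(p\,A_q(v,u)\bigr)$ together with $A_q(T\epsilon_\beta,T\epsilon_\alpha)=0$. That identity comes from para-linearity of $\fx{\cdot}{u}$ and the fact that octonionic associators have zero real part; appealing to ``the norm is real'' alone does not give it.
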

	\begin{proof}
		We prove  that \eqref{it: T is p.iso} implies \eqref{it: T AONS =WONS}.
		Let $\{\epsilon_\alpha\}_{\alpha\in \Lambda}$ be an arbitrary  	associative orthonormal  set  in $(\ker T)^\perp$.
		Recall the  polarization identity (\cite[formula (4.17)]{huoqinghai2022Riesz}):
		$$\fx{x}{y}=\sum_{i=0}^7e_i(\fsh{e_ix+y}^2-\fsh{e_ix-y}^2).$$
		Therefore it follows from assertion \eqref{it: T is p.iso} that 
		\begin{eqnarray*}
			\fx{T\epsilon_\alpha}{T\epsilon_\beta}&=&\sum_{i=0}^7e_i(\fsh{e_iT\epsilon_\alpha+T\epsilon_\beta}^2-\fsh{e_iT\epsilon_\alpha-T\epsilon_\beta}^2)\\
			&=&
			\sum_{i=0}^7e_i(\fsh{T(e_i\epsilon_\alpha+\epsilon_\beta)}^2-\fsh{T(e_i\epsilon_\alpha-\epsilon_\beta)}^2)\\
			&=&	\sum_{i=0}^7e_i(\fsh{e_i\epsilon_\alpha+\epsilon_\beta}^2-\fsh{e_i\epsilon_\alpha-\epsilon_\beta}^2)\\&=&\fx{\epsilon_\alpha}{\epsilon_\beta}.
		\end{eqnarray*}  
		Hence $\{T\epsilon_\alpha\}_{\alpha\in \Lambda}$ is an  orthonormal set of $H$. For any $p\in \O$, replacing $\epsilon_{\beta}$ with $p\epsilon_{\beta}$ above, we obtain 
		$$\fx{T\epsilon_\alpha}{Tp\epsilon_\beta}=\fx{\epsilon_\alpha}{p\epsilon_\beta}.$$
		Thus for any $\alpha\neq \beta$ in $\Lambda$, we have $$A_p(T\epsilon_\beta,T\epsilon_\alpha)=\fx{pT\epsilon_\beta}{T\epsilon_\alpha}=\overline{\fx{T\epsilon_\alpha}{Tp\epsilon_\beta}}=\overline{\fx{\epsilon_\alpha}{p\epsilon_\beta}}=0.$$
		This shows that $\{T\epsilon_\alpha\}_{\alpha\in \Lambda}$ is a weak  associative orthonormal set of $H$.
		
		It is obvious that \eqref{it: T AONS =WONS} implies \eqref{it:'T AONS=WONS}. We prove that  \eqref{it:'T AONS=WONS} implies \eqref{it: T is p.iso}.
		Note that $\ker T$ is a Hilbert $\O$-submodule of $H$.
		We can extend the associative orthonormal  basis $\{\epsilon_\alpha\}_{\alpha\in \Lambda}$ of Hilbert $\O$-submodule $(\ker T)^\perp$ to an associative orthonormal  basis $\{\epsilon_\alpha\}_{\alpha\in \Lambda}\cup \{\epsilon_{\alpha'}\}_{{\alpha'}\in \Lambda'}$ of $H$, where $\{\epsilon_{\alpha'}\}_{{\alpha'}\in \Lambda'}$ is an associative orthonormal  basis of $\ker T$. 
		For any $x=\sum_{\alpha\in \Lambda}\fx{x}{\epsilon_\alpha}\epsilon_\alpha+\sum_{{\alpha'}\in \Lambda'}\fx{x}{\epsilon_{\alpha'}}\epsilon_{\alpha'}$, it follows from the para-linearity of $T$ that
		$$Tx=\sum_{\alpha\in \Lambda}\fx{x}{\epsilon_\alpha}T\epsilon_\alpha.$$
		In view of the proof of Parseval theorem, for any given $x\in H$, the sum above is in fact countable (only countable summands are not zero)
		$$Tx=\sum_{i=1}^\infty\fx{x}{\epsilon_{\alpha_i}}T\epsilon_{\alpha_i}.$$
		To  prove that $T|_{(\ker T)^\perp}$ is an isometry, we recall an identity (\cite[formula (34)]{huoqinghai2021tensor}):
		\begin{eqnarray}\label{eq:bessel}
			\fsh{y}^2=\sum_{i=1}^N\abs{\fx{y}{\xi_i}}^2+\fsh{y-\sum_{i=1}^N\fx{y}{\xi_i}\xi_i}^2,
		\end{eqnarray}
		where $\{\xi_i\}_{i=1}^N$ is a weak associative orthonormal set.
		
		For any $x\in (\ker T)^\perp$, denote  $$y_N=\sum_{i=1}^N\fx{x}{\epsilon_{\alpha_i}}T\epsilon_{\alpha_i}.$$  Notice that $\{T\epsilon_{\alpha_i}\}_{i=1}^\infty$ is a weak associative orthonormal set.  It follows that 
		$$\fx{y_N}{T\epsilon_{\alpha_i}}=\fx{x}{\epsilon_{\alpha_i}}.$$
		Therefore we conclude from the identity \eqref{eq:bessel} that
		\begin{eqnarray*}
			\fsh{Tx}^2&=&\fsh{\lim_{N\to \infty}\sum_{i=1}^N\fx{x}{\epsilon_{\alpha_i}}T\epsilon_{\alpha_i}}^2\\
			&=&\lim_{N\to \infty}\fsh{y_N}^2\\
			&=&\lim_{N\to \infty}\left(\sum_{i=1}^N\abs{\fx{y_N}{T\epsilon_{\alpha_i}}}^2+	\fsh{y_N-\sum_{i=1}^N\fx{y_N}{T\epsilon_{\alpha_i}}T\epsilon_{\alpha_i}}^2\right)\\
			&=&\lim_{N\to \infty}\sum_{i=1}^N\abs{\fx{x}{\epsilon_{\alpha_i}}}^2.
		\end{eqnarray*}
		Note that $\{\epsilon_\alpha\}_{\alpha\in \Lambda}$ is an  associative orthonormal  basis  of the  Hilbert $\O$-submodule $(\ker T)^\perp$. It follows that 
		$$\fsh{x}^2=\sum_{\alpha\in \Lambda}\abs{\fx{x}{\epsilon_{\alpha_i}}}^2=\lim_{N\to \infty}\sum_{i=1}^N\abs{\fx{x}{\epsilon_{\alpha_i}}}^2.$$
		Thus $$\fsh{Tx}=\fsh{x}$$ for all $x\in (\ker T)^\perp$.
		This proves that $T$ is a partial isometry.\end{proof}
	
	\begin{rem}
		
		This theorem shows that each para-linear partial isometry corresponds to a unique weak associative orthonormal set up to a choice of associative orthonormal set. Conversely, given an arbitrary associative orthonormal set $\{\epsilon_\alpha\}_{\alpha\in \Lambda}$, for each weak associative orthonormal set $\{\xi_\alpha\}_{\alpha\in \Lambda}$, we can obtain a  para-linear partial isometry defined by 
		$$Tx=	\sum_{\alpha\in \Lambda} \fx{{x}}{{\epsilon_\alpha}}\xi_\alpha.$$
		Note that if $$	\sum_{\alpha\in \Lambda} \fx{{x}}{{\epsilon_\alpha}}\xi_\alpha=0,$$
		then for each $\alpha\in \Lambda$, $$\fx{{x}}{{\epsilon_\alpha}}=\fx{	\sum_{\beta\in \Lambda} \fx{{x}}{{\epsilon_\beta}}\xi_\beta}{\xi_\alpha}=0.$$ This shows  that 
		$$(\ker T)^\perp=\left\langle \{\epsilon_{\alpha}\}_{\alpha\in \Lambda}\right\rangle_{\O}.$$
		The rest proof of $T$ being a para-linear partial isometry runs in the same way as in the proof of Theorem \ref{thm:p.iso}.
	\end{rem}

	As a byproduct, we can prove that the set of weak associative orthonormal bases is closed under the unit octonionic scalar multiplication.
	To this end, we introduce the octonionic module structure of $\Hom_{\mathcal{LO}}(H,H)$ on a Hilbert $\O$-bimodule. 
	Let $T$ be a para-linear operator on $H$. Define 
{	\begin{eqnarray}
			(T\odot  p)(x):=T(x)p-A_p(x,T)\label{eq:f p}\\
			(p\odot  T)(x):=T(xp)+A_p(x,T)\label{eq:p f}
	\end{eqnarray}  }
	for any $x\in H$ and $p\in \O$.
	It is shown that $\Hom_{\mathcal{LO}}(H,H)$ is an $\O$-bimodule with respect to the octonionic scalar multiplication \eqref{eq:f p} and \eqref{eq:p f} (see \cite{huoqinghai2020nonass}).
	
	Similar to \cite[Lemma 5.1]{huoqinghai2022Riesz}, we can generalize the Moufang identity as follows.
	\begin{lemma}
		
		Let $T$ be a para-linear operator on $H$. Then for any octonion $p\neq 0$,
		we have
		{	\begin{eqnarray}
				(T\odot  p)(x)=pT(p^{-1}x)p\label{eq:T p}\\
				(p\odot  T)(x)=p^{-1}T(pxp)\label{eq:p T}
		\end{eqnarray}}
		for all $x\in H$.
	\end{lemma}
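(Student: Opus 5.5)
The plan is to reduce both identities to octonionic identities, using the tensor structure $H=\O\otimes\re H$ from \eqref{eq:reM=huaaM=435} together with Theorem \ref{cor: T re=0 iff T=0}. Both sides of \eqref{eq:T p} and \eqref{eq:p T} are real linear in $x$. Every $x\in H$ is a finite real-linear combination of elements $qy$ with $q\in\O$ and $y\in\re H$, so it suffices to verify the identities for $x=qy$. For such $x$, Theorem \ref{cor: T re=0 iff T=0}(1) gives $A_q(y,T)=0$, hence $T(qy)=qT(y)$. Likewise $T(p^{-1}qy)=(p^{-1}q)T(y)$ and $T(pqy)=(pq)T(y)$. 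Here we also use $pqy=(pq)y$ and $(qy)p=(qp)y$, which hold because $y$ is associative and, in the bimodule, $[p,q,y]=[y,p,q]=[q,y,p]=0$ (so $yp=py$ as well).

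Set $m=T(y)$. Unwinding the definitions, $A_p(qy,T)=T(pqy)-pT(qy)=(pq)m-p(qm)$. So \eqref{eq:T p} becomes
$$(qm)p-(pq)m+p(qm)=p\big((p^{-1}q)m\big)p ,$$
and, since $T(xp)=T((qp)y)=(qp)m$ and $T(pxp)=((pq)p)m$, \eqref{eq:p T} becomes
$$(qp)m+(pq)m-p(qm)=p^{-1}\big(((pq)p)m\big).$$
In writing $p(\cdot)p$ without brackets I use flexibility $[p,m',p]=0$, which is a consequence of the bimodule axioms. Next I decompose $m=\sum_j e_jm_j$ with $m_j\in\re H$. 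Since the $m_j$ associate with everything and commute with octonions, each term reduces to the same identity with $m$ replaced by an octonion $r=e_j$. So it remains to prove, for $p,q,r\in\O$ with $p\neq0$, the two identities
$$(qr)p-[p,q,r]=p\big((p^{-1}q)r\big)p,\qquad (qp)r+[p,q,r]=p^{-1}\big(((pq)p)r\big).$$

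For the first identity, I apply the Moufang identity $p(ar)p=(pa)(rp)$ with $a=p^{-1}q$. Since $pa=p(p^{-1}q)=q$ by alternativity, the right side is $q(rp)$. Then $q(rp)=(qr)p-[q,r,p]=(qr)p-[p,q,r]$, using that the associator in $\O$ is alternating and hence invariant under cyclic permutations. For the second identity, I apply the Moufang identity $((pq)p)r=p(q(pr))$. This gives $p^{-1}(p(q(pr)))=q(pr)$, again by alternativity. On the other side, $(qp)r+[p,q,r]=(qp)r-[q,p,r]=q(pr)$. This settles both identities.

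The main obstacle I expect is the bookkeeping in the reduction step: it must be justified carefully that all the relevant products with elements of $\re H$ can be rebracketed freely. If that becomes delicate, I will fall back on the explicit model $H\cong\O\otimes\re H$ from \cite{huoqinghai2021tensor}, where the computation is literally coordinatewise in $\O$.
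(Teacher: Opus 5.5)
Your proof is correct, but it takes a different route from the paper's.

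\textbf{The paper's route.} The paper normalizes $\abs{p}=1$ and works with an arbitrary $x\in H$. It quotes three identities for the second associator of a para-linear map from \cite{huoqinghai2020nonass}:
\[
A_p(\overline{p}x,T)=pA_p(x,T),\qquad pA_p(x,T)=A_p(x,T)\overline{p},\qquad A_p(xp,T)=pA_p(x,T).
\]
With these, each formula is a two-line manipulation, for instance $(T\odot p)(x)\overline{p}=Tx-A_p(\overline{p}x,T)=pT(\overline{p}x)$.

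\textbf{Your route.} You use real linearity and $H=\bigoplus_i e_i\re H$ to reduce to $x=qy$ with $y\in\re H$. You then use $T(qy)=qT(y)$ from part (1) of Theorem \ref{cor: T re=0 iff T=0} and split $T(y)=\sum_j e_jm_j$. This leaves two identities in $\O$, which you verify with the middle and left Moufang identities; both reductions and both Moufang steps are right.

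\textbf{Comparison.} Your argument is more self-contained. It needs only part (1) of Theorem \ref{cor: T re=0 iff T=0}, the tensor structure of $H$, and classical octonion identities. It also handles every $p\neq 0$ without normalizing, and in effect it re-derives the associator identities the paper imports. The paper's argument is shorter and coordinate-free once those identities are available.

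\textbf{One point to tighten.} You justify $yp=py$ for $y\in\re H$ as a consequence of $[p,q,y]=[y,p,q]=[q,y,p]=0$. That deduction is not valid as stated: no formal manipulation of vanishing associators yields commutativity. For a bimodule over an associative algebra all associators vanish identically, yet $pm\neq mp$ in general. The commutation is a genuine structural property of $\O$-bimodules. It comes from $H\cong\O\otimes\re H$ as bimodules, or from the structure theory of alternative $\O$-bimodules. Cite that fact instead of the parenthetical deduction. You need it both for $(qy)p=(qp)y$ and when you pass from $m$ to $r=e_j$. The paper relies on the same fact, for example in the step $T(\epsilon_\alpha p)=T(p\epsilon_\alpha)$ in the proof of Theorem \ref{thm:pxi is wonb}.
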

	\begin{proof}
		\bfs \ $\abs{p}=1$. Recall  identity (\cite[Proposition 3.6]{huoqinghai2020nonass}) $$A_p(\overline{p}x,T)=pA_p(x,T) $$ and identity  (3.32) in \cite[Proposition 3.15]{huoqinghai2020nonass}:
		$$pA_p(x,T)=A_p(x,T)\overline{p}.$$ 
		It follows that 
		\begin{eqnarray*}
			(T\odot  p)(x)p^{-1}&=&((Tx)p-A_p(x,T))\overline{p}\\
			&=&(Tx)-pA_p(x,T)\\
			&=&(Tx)-A_p(\overline{p}x,T)\\
			&=&pT(p^{-1}x).
		\end{eqnarray*}
		This implies \eqref{eq:T p}.

		By definition, we have
		$$	(p\odot  T)(x)=(p\odot  T)((xp)\overline{p})=T(xp)+A_p((xp)\overline{p},T).$$
		Recall identity ((3.39) in \cite[Proposition 3.19]{huoqinghai2020nonass}):
		$$A_p(xp,T)=pA_p(x,T).$$
		We conclude that
		\begin{eqnarray*}
			(p\odot  T)(x)		&=&T(xp)+\overline{p}A_p(xp,T)\\
			&=&T(xp)+\overline{p}T(pxp)-\overline{p}(pT(xp))\\
			&=&p^{-1}T(pxp).
		\end{eqnarray*}
		This proves \eqref{eq:p T}.
	\end{proof}
	\begin{thm}\label{thm:pxi is wonb}
		Let $\{\xi_\alpha\}_{\alpha\in \Lambda}$ be an arbitrary weak associative orthonormal basis (set). Then for any $p\in \O$ with $\abs{p}=1$,  
		$\{p\xi_\alpha\}_{\alpha\in \Lambda}$ and $\{\xi_\alpha p \}_{\alpha\in \Lambda}$ are also weak associative orthonormal bases (sets).
	\end{thm}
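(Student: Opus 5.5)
The plan is to reduce Theorem \ref{thm:pxi is wonb} to Theorem \ref{thm:equv isome} and Theorem \ref{thm:p.iso} by realizing $x\mapsto px$ and $x\mapsto xp$ as para-linear isometric operators. Fix an associative orthonormal basis $\{\epsilon_\alpha\}_{\alpha\in\Lambda}$ of $H$ and a weak associative orthonormal basis $\{\xi_\alpha\}$. By the remark after Theorem \ref{thm:equv isome}, the operator
$$Tx=\sum_{\alpha\in\Lambda}\fx{x}{\epsilon_\alpha}\xi_\alpha$$
is a para-linear isometric isomorphism with $T\epsilon_\alpha=\xi_\alpha$. For the set case, use the remark after Theorem \ref{thm:p.iso} instead, which gives a partial isometry with $(\ker T)^\perp$ spanned by the $\epsilon_\alpha$, so $\ker T$ is an $\O$-submodule.

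Next consider $p\odot T$ and $T\odot p$. Both are para-linear because $\Hom_{\mathcal{LO}}(H,H)$ is an $\O$-bimodule. Since $\epsilon_\alpha\in\re H$, the identities \eqref{eq:T p} and \eqref{eq:p T} give
$$(T\odot p)(\epsilon_\alpha)=pT(\overline{p}\epsilon_\alpha)p=pT(\epsilon_\alpha\overline{p})p.$$
A cleaner route is to read off \eqref{eq:f p} directly: $(T\odot p)(\epsilon_\alpha)=\xi_\alpha p-A_p(\epsilon_\alpha,T)=\xi_\alpha p$, because $A_p$ vanishes on associative elements by Theorem \ref{cor: T re=0 iff T=0}. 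Similarly, \eqref{eq:p f} gives $(p\odot T)(\epsilon_\alpha)=T(\epsilon_\alpha p)=T(p\epsilon_\alpha)=pT\epsilon_\alpha=p\xi_\alpha$, again because $\epsilon_\alpha$ is associative so $A_p(\epsilon_\alpha,T)=0$.

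It then suffices to show that $T\odot p$ and $p\odot T$ are isometric isomorphisms, respectively partial isometries, and apply (1)$\Rightarrow$(2) of Theorem \ref{thm:equv isome}, resp.\ Theorem \ref{thm:p.iso}, with the basis $\epsilon$. By \eqref{eq:T p}, $\|(T\odot p)x\|=\|pT(\overline p x)p\|=\|\overline p x\|=\|x\|$, using $|p|=1$ and the fact that octonionic scalar multiplication by a unit preserves norms. The latter follows from positivity together with $\fx{px}{px}=|p|^2\fx{x}{x}$, obtained from \eqref{eq:<pu,pv>in lemma} with $q=p$ and alternativity. The same argument works for \eqref{eq:p T}. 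Surjectivity is immediate, since $y\mapsto \overline{p}\,T^{-1}(\overline p y\overline p)$ (appropriately adjusted) inverts $T\odot p$ by the Moufang-type formula, and $x\mapsto pxp$ is bijective with inverse $x\mapsto \overline p x\overline p$.

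The main obstacle is this last bookkeeping: checking that $x\mapsto px$ and $x\mapsto pxp$ are norm-preserving bijections on a Hilbert $\O$-bimodule, and, in the partial case, that $\ker(T\odot p)$ is still an $\O$-submodule with the same orthogonal complement. If the norm argument via \eqref{eq:<pu,pv>in lemma} fails, I would instead use the tensor decomposition \eqref{eq:tensor pd}, $H\cong \re H\otimes\O$, under which left and right multiplication by a unit octonion act as orthogonal maps on each $\O$ factor. An alternative, avoiding operators altogether, is to verify directly that $\fx{p\xi_\alpha}{p\xi_\beta}=\delta_{\alpha\beta}$ and $A_q(p\xi_\alpha,p\xi_\beta)=0$ via Parseval. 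Maximality of the basis would then follow from Theorem \ref{thm:Parseval thm} applied to $\overline{p}z$. I would keep this direct route as a fallback.
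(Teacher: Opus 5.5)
Your proposal is correct and follows essentially the paper's own proof. Like the paper, you write $\{\xi_\alpha\}=\{T\epsilon_\alpha\}$ for $Tx=\sum_{\alpha}\fx{x}{\epsilon_\alpha}\xi_\alpha$, use the Moufang-type formulas to see that $T\odot p$ and $p\odot T$ are again isometric isomorphisms (resp.\ partial isometries), apply Theorem~\ref{thm:equv isome} (resp.\ Theorem~\ref{thm:p.iso}), and read off $(T\odot p)(\epsilon_\alpha)=\xi_\alpha p$ and $(p\odot T)(\epsilon_\alpha)=p\xi_\alpha$ from the definitions, since $A_p(\epsilon_\alpha,T)=0$.

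The bookkeeping you flag is left implicit in the paper and is settled by the decomposition $x=\sum_i e_ix_i$ with $x_i\in\re H$. It covers right multiplication by a unit octonion preserving norms, and also $\ker(T\odot p)=\ker(p\odot T)=\ker T$ in the set case, because $\ker T=\{x:\fx{x}{\epsilon_\alpha}=0\ \forall\alpha\}$ and its orthogonal complement are both closed under $x\mapsto px$ and $x\mapsto xp$. Also, your inverse of $T\odot p$ should read $y\mapsto p\,T^{-1}(\overline{p}\,y\,\overline{p})$.
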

	\begin{proof}
		For any given associative orthonormal basis (set) $\{\epsilon_\alpha  \}_{\alpha\in \Lambda}$, we define
		$$Tx:=\sum_{\alpha\in \Lambda}\fx{x}{\epsilon_{\alpha}}\xi_\alpha.$$
		It is shown that $T$ is an isometric isomorphism (partial isometry).
		For any $p\in \O$ with $\abs{p}=1$, $p\odot T$ and $T\odot p$ are para-linear operators. In view of \eqref{eq:p T} and \eqref{eq:T p}, we obtain if $T$ is surjective, then so are $p\odot T$ and $T\odot p$, and
		$$\fsh{(T\odot p)(x)}=\fsh{pT(p^{-1}x)p}=\fsh{T(p^{-1}x)}=\fsh{p^{-1}x}=\fsh{x}$$ and 
		$$\fsh{(p\odot  T)(x)}=\fsh{p^{-1}T(pxp)}=\fsh{T(pxp)}=\fsh{pxp}=\fsh{x}$$
		for any $x\in H$ ($x\in (\ker T)^\perp$). This shows that  $p\odot T$ and $T\odot p$ are para-linear isometric isomorphisms (partial isometries). By Theorem \ref{thm:equv isome} (Theorem \ref{thm:p.iso}), we get
		$\{(T\odot p)(\epsilon_{\alpha})\}_{\alpha\in \Lambda}$, $\{(p\odot T)(\epsilon_{\alpha})\}_{\alpha\in \Lambda}$ are  weak associative orthonormal bases (sets).
		Since $\epsilon_{\alpha}\in \re H$, it follows that
		$$(T\odot p)(\epsilon_{\alpha})=T(\epsilon_{\alpha})p=\xi_\alpha p, \qquad (p\odot T)(\epsilon_{\alpha})=T(\epsilon_{\alpha}p)=T(p\epsilon_{\alpha})=p\xi_\alpha.$$
		This shows that $\{p\xi_\alpha\}_{\alpha\in \Lambda}$ and $\{\xi_\alpha p \}_{\alpha\in \Lambda}$ are also weak associative orthonormal bases (sets).
	\end{proof}
	\subsection{$\O$-isometric transformations on $\O^n$}
	
	We consider  the $\O$-isometric transformations on $\O^n$ in this section.
	It turns out that $\O$-isometric transformations on $\O^n$
	can be regarded as    matrices on $\O^n$ whose row vectors or conjugate transpose of  column vectors form a weak associative basis.
{The notation $\O^n$ shall  represents the set of all row vectors $(x_1,\dots,x_n)$, where $x_i\in \O$ for $i=1,\dots,n$.
Throughout, we use bold symbols  to  represent vectors of $\O^n$ or matrix on $\O^n$.	The octonionic inner product on $\O^n$ is 
	$$\fx{\pmb x}{\pmb y}=\sum_{i=1}^nx_i\overline{y_i}$$ for all row vectors  $\pmb x=(x_1,\dots,x_n),\pmb y=(y_1,\dots,y_n)\in \O^n$. 
	 }

	We denote by $$\pmb {\epsilon^i}=(0,\ldots,0,1,0,\ldots,0)$$ the row vector  with $1$ in the $i$-th slot. Then $$\pmb{\epsilon}:=\{\pmb {\epsilon^i}\}_{i=1}^n$$ is an associative orthonormal basis of  $\O^n$. 	
Let  $T$ be an octonionic para-linear  operator on $\O^n$.  
	There is an octonionic   matrix
	corresponding to $T$ with respect  to the associative orthonormal basis $\pmb{\epsilon}$. Concretely,
	denote  $$\pmb{x^i}:=T(\pmb{\epsilon^i})$$ for each $i=1,\dots,n$.
	Then for  any $\pmb{y}=(y_1,\dots,y_n)\in \O^n$,
	\begin{eqnarray}\label{eq:Ty=yT}
		T(\pmb{y})=(y_1,\dots,y_n)_{1\times n} \
		\left(\begin{array}[c]{ccc}
			\pmb{x^1}\\
			\vdots\\
			\pmb{x^n}
		\end{array}\right)_{n\times n}.	
	\end{eqnarray}
	Note that here we use the right action of a matrix since $T$ is left para-linear.
	Hence we can think of $T$ as  the matrix
	$$\pmb{T}:=\left(\begin{array}[c]{ccc}
		\pmb{x^1}\\
		\vdots\\
		\pmb{x^n}
	\end{array}\right)_{n\times n}$$
	so that
	$$T(\pmb{y})=\pmb{y}_{1\times n}\pmb{T}_{n\times n},$$
	where the multiplication of the right hand side is the octonionic  matrix multiplication.
	Suppose $$\pmb{x^i}=(x{^i}_1,\dots,x^i{_n}),$$
	for each $i=1,\dots,n$. {Let $\pmb{x_i}^{\dag}$ denote    the conjugate transpose of the  column vector of the matrix  $\pmb{T}$
	$$\pmb{x_i}^{\dag}:=\left(\begin{array}[c]{ccc}
		\overline{{x^1}_{i}}\\
		\vdots\\
		\overline{{x^n}_{i}}
	\end{array}\right)_{n\times 1}^T=(\overline{{x^1}_{i}},\dots,\overline{{x^n}_{i}}).$$
Here, for a column vector  $\pmb{a}$, we denote its transpose by  $\pmb{a}^T$. With this notation, we write
	\begin{eqnarray*}
		T  (	\pmb{y})&=&\sum_{i=1}^n \fx{\pmb{y}}{\pmb{x_i}^{\dag}}\pmb {\epsilon^i}.
	\end{eqnarray*}
}

	Since the dual operator $T^*$ of a para-linear operator $T$ is also para-linear by Lemma \ref{lem:T* is paralin}, it is easy to show that the  corresponding matrix of $T^*$ is the conjugate transpose of the matrix of $T$.
	Note that each isometric transformation on $\O^n$ is automatically a bijection. We can translate Theorem \ref{thm:equv isome} to the finite dimensional case as follows.
	
	\begin{thm}\label{thm:ft equv}
		Let $T$ be a left para-linear operator on $\O^n$ and  $\pmb{T}$ be the matrix of $T$ in the sense of \eqref{eq:Ty=yT}. Then
		the following are equivalent.
		\begin{enumerate}
			\item \label{it:isom}$T$ is  an $\O$-isometry.
			\item \label{it:map basis to basis} $T$ maps any associative orthonormal basis to a weak associative orthonormal basis.
			\item \label{it:row is wonb} The row vectors $\{\pmb{x^i}\}_{i=1}^n$  of $\pmb{T}$ form a weak  associative orthonormal basis of $\O^n$.
			\item \label{it:column is wonb} {The conjugate transpose of the  column vectors
			$\{\pmb{x_i}^\dag\}_{i=1}^n$ of the matrix  $\pmb{T}$  form a weak  associative orthonormal basis of $\O^n$.}
			
			\item \label{it:TT*=T*T=I}
			$T^*\circ T=T\circ T^*=\text{Id}$.
			
		\end{enumerate}
		Let $\pmb{T}$ be an octonionic matrices  and $\pmb{ T^*} $ denote its dual operator. Then we have:
		$$\pmb{T}\pmb{ T^*}=\pmb{\text{Id}}=\pmb{ T^*}\pmb{T}.$$
	\end{thm}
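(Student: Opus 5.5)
We reduce everything to Theorem \ref{thm:equv isome} applied to $H=\O^n$, which is a Hilbert $\O$-bimodule with $\re H=\R^n$. In this setting $\Lambda$ has cardinality $n$, since $\pmb{\epsilon}=\{\pmb{\epsilon^i}\}_{i=1}^n$ is an associative orthonormal basis. Every associative orthonormal basis is also weak associative, so $n$ is the common cardinality of weak associative orthonormal bases.

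The first step is to remove the word ``isomorphism'' from condition (1) of Theorem \ref{thm:equv isome}. An $\O$-isometry is real linear and injective on the real vector space $\O^n\cong\R^{8n}$, which is finite dimensional. Hence it is bijective, so (1) here coincides with (1) of Theorem \ref{thm:equv isome}. Item (1) of Theorem \ref{thm:equv isome} then gives (1)$\Leftrightarrow$(2)$\Leftrightarrow$(5) directly.

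For (3), note that $\pmb{x^i}=T(\pmb{\epsilon^i})$ by construction. Thus (3) says exactly that $T$ maps the particular associative orthonormal basis $\pmb{\epsilon}$ to a weak associative orthonormal basis. This is condition (4) of Theorem \ref{thm:equv isome}, which is equivalent to (1).

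For (4), we use the representation $T(\pmb y)=\sum_{i}\fx{\pmb y}{\pmb{x_i}^\dag}\pmb{\epsilon^i}$ already recorded before the statement. By the uniqueness in item (3) of Theorem \ref{thm:equv isome}, if $T$ is an isometry then the $\xi_i$ produced there must equal $\pmb{x_i}^\dag$. Indeed, both $\fx{\pmb y}{\xi_i}$ and $\fx{\pmb y}{\pmb{x_i}^\dag}$ give the $i$-th coordinate of $T\pmb y$ for every $\pmb y$, so the two vectors coincide. Hence the $\pmb{x_i}^\dag$ form a weak associative orthonormal basis. Conversely, suppose $\{\pmb{x_i}^\dag\}$ is a weak associative orthonormal basis. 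Then $T$ has the form appearing in item (3) of Theorem \ref{thm:equv isome}, and the argument given there for (3)$\Rightarrow$(1) applies verbatim to this single choice of $\epsilon$: Parseval gives $\fsh{T\pmb y}=\fsh{\pmb y}$, and the explicit preimage gives surjectivity. So (4)$\Leftrightarrow$(1).

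Finally, for the matrix identity we use two facts stated earlier: the matrix of $T^*$ is the conjugate transpose of $\pmb T$, and by Example \ref{eg:paralinear} the regular composition $f_{\pmb S}\circledcirc f_{\pmb T}$ corresponds to the matrix product $\pmb T\pmb S$. Identity \eqref{eq:T*T=I} then reads $\pmb T\pmb{T^*}=\pmb{T^*}\pmb T=\pmb{\text{Id}}$. This matrix identity presumably requires $T$ to be an isometry, since the example in the introduction shows the converse fails. We interpret the final sentence accordingly, as a consequence of (1)--(5).

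The main obstacle we expect is the bookkeeping in the last step. We must check that the order reversal in Example \ref{eg:paralinear} produces $\pmb T\pmb{T^*}$ from $T^*\circledcirc T$, and $\pmb{T^*}\pmb T$ from $T\circledcirc T^*$, without mixing up which identity is which. We must also check that the conjugate-transpose description of $T^*$ is compatible with the right-action convention \eqref{eq:Ty=yT}. Both identities hold in any case, so a mismatch of order would only swap the two.
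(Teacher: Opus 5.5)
Your proposal is correct and follows essentially the paper's route. The paper specializes Theorem \ref{thm:equv isome} to $H=\mathbb{O}^n$, using that an isometry of this finite-dimensional space is automatically bijective. It obtains (4) from item (3) there via the representation $T(\pmb y)=\sum_i\langle \pmb y,\pmb{x_i}^{\dag}\rangle\pmb{\epsilon^i}$, and it gets the matrix identity from \eqref{eq:T*T=I} together with Example \ref{eg:paralinear} and the conjugate-transpose description of $\pmb{T^*}$. Reading the final sentence as a consequence of (1)--(5) is right, since for an arbitrary matrix such as $\pmb T=0$ it is false, and your order check holds: $T^*\circledcirc T$ corresponds to $\pmb T\pmb{T^*}$ and $T\circledcirc T^*$ to $\pmb{T^*}\pmb T$.
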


	\begin{rem}
		
		It is worthy noticing that  $\pmb{T}\pmb{ T^*}=\pmb{\text{Id}}$ is not equivalent to $\pmb{T^*}\pmb{T}=\pmb{\text{Id}}$ in the octonionic setting \cite{qianchao2022}. Let $T$ be the operator whose matrix is $$\dfrac{1}{\sqrt{2}}\left(\begin{array}[c]{ccc}
			e_7& e_3\\
			e_2& -e_6
		\end{array}\right).$$
		It follows that 
		$$\pmb{T}\pmb{ T^*}=\pmb{\text{Id}},\quad \pmb{T^*}\pmb{T}=\left(\begin{array}[c]{ccc}
			1& -e_4\\
			e_4& 1
		\end{array}\right).$$
		It can be seen that its row vectors  form an orthonormal basis while its column vectors are not.
		
		Moreover, we find that there exists an octonionic matrix $\pmb{T}$ satisfying $$\pmb{T}\pmb{ T^*}=\pmb{\text{Id}}=\pmb{ T^*}\pmb{T}$$ but not being  an isometry.
		Let $$\pmb{T}=\dfrac{1}{\sqrt{2}}\left(\begin{array}[c]{ccc}
			1&e_1 \\
			-e_3& e_2
		\end{array}\right).$$
		It can be seen that its row vectors  form an orthonormal basis  of $\mathbb H_{e_1,e_2}^2$. Hence we have$$\pmb{T}\pmb{ T^*}=\pmb{\text{Id}}=\pmb{ T^*}\pmb{T}.$$
		However, $(1,e_1 ),
		(	-e_3, e_2)$ is not  a weak associative orthonormal basis.  In fact, for any $p\in \O$,
		$$A_p((1,e_1 ),
		(	-e_3, e_2))=pe_3+(pe_1)\overline{e_2}=-[p,e_1,e_2].$$
		It is clear that there exists $p\in \O$ such that $A_p((1,e_1 ),
		(	-e_3, e_2))\neq0$.
	\end{rem}

	Similarly, we translate Theorem \ref{thm:p.iso} to the finite dimensional case as follows.
	\begin{thm}
		
		For any para-linear partial isometry on $\O^n$, there exists an associative orthonormal basis with respect which, the matrix of $T$ is of the form
		$$\left(\begin{array}[c]{ccc}
			\pmb{x^1}\\
			\vdots\\
			\pmb{x^k}\\
			\pmb{0}\\
			\vdots\\
			\pmb{0}
		\end{array}\right)_{n\times n}.$$
		Here $(\pmb{x^1},\dots, 	\pmb{x^k})$ is a weak  associative orthonormal set of $\O^n$.
		
		Conversely, for any   weak  associative orthonormal set $(\pmb{x^1},\dots, 	\pmb{x^k})$ of $\O^n$, the matrix $$\left(\begin{array}[c]{ccc}
			\pmb{x^1}\\
			\vdots\\
			\pmb{x^k}\\
			\pmb{0}\\
			\vdots\\
			\pmb{0}
		\end{array}\right)_{n\times n}$$
		defines a para-linear partial isometry on $\O^n$.
	\end{thm}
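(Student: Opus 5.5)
The plan is to reduce both directions to Theorem \ref{thm:p.iso} applied on $H=\O^n$. The first step is to check that its hypothesis holds here. This is also the step I expect to be the main obstacle: Theorem \ref{thm:p.iso} assumes that $\ker T$ is an $\O$-submodule. For the forward direction I will read this as part of the standing setting for partial isometries. Concretely, I take $\ker T$ to be a Hilbert $\O$-submodule of $\O^n$, so that $(\ker T)^\perp$ is one as well. If this needs justification, I would try to derive it from para-linearity combined with Theorem \ref{cor: T re=0 iff T=0}.

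For the forward direction, let $T$ be a para-linear partial isometry. Since $(\ker T)^\perp$ and $\ker T$ are Hilbert $\O$-submodules, each has an associative orthonormal basis, namely a real orthonormal basis of its real part. Write these as $\{\pmb{\eta^1},\dots,\pmb{\eta^k}\}\subset(\ker T)^\perp$ and $\{\pmb{\eta^{k+1}},\dots,\pmb{\eta^n}\}\subset\ker T$. Together they form an associative orthonormal basis of $\O^n$.

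By \eqref{eq:Ty=yT}, the matrix of $T$ with respect to this basis has rows $T\pmb{\eta^i}$. Para-linearity together with Theorem \ref{cor: T re=0 iff T=0}(1) gives $T\big(\sum y_i\pmb{\eta^i}\big)=\sum y_iT\pmb{\eta^i}$. The rows with $i>k$ vanish because $\pmb{\eta^i}\in\ker T$. By Theorem \ref{thm:p.iso}, the implication from (1) to (2), the remaining rows $\{T\pmb{\eta^i}\}_{i\le k}$ form a weak associative orthonormal set. This gives the asserted block form.

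For the converse, let $(\pmb{x^1},\dots,\pmb{x^k})$ be a weak associative orthonormal set, and let $T$ be the matrix with these rows followed by zero rows. Then $T(\pmb{y})=\sum_{i\le k}y_i\pmb{x^i}=\sum_{i\le k}\fx{\pmb y}{\pmb{\epsilon^i}}\pmb{x^i}$, which is exactly the operator in the Remark after Theorem \ref{thm:p.iso}, with associative set $\{\pmb{\epsilon^i}\}_{i\le k}$. I next compute the kernel. If $T\pmb y=0$, pairing with $\pmb{x^j}$ and using weak associativity (so that $\fx{y_i\pmb{x^i}}{\pmb{x^j}}=y_i\delta_{ij}$) gives $y_j=0$ for all $j\le k$. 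Hence $\ker T=\{\pmb y: y_1=\dots=y_k=0\}$, which is an $\O$-submodule, and $(\ker T)^\perp$ is the $\O$-span of $\pmb{\epsilon^1},\dots,\pmb{\epsilon^k}$. In this span, $\{\pmb{\epsilon^i}\}_{i\le k}$ is an associative orthonormal basis whose image is the weak associative orthonormal set $\{\pmb{x^i}\}$. The implication from (3) to (1) of Theorem \ref{thm:p.iso} then shows that $T$ is a partial isometry.

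Alternatively, the isometry on $(\ker T)^\perp$ can be checked directly. Apply identity \eqref{eq:bessel} with $N=k$ to $T\pmb y$: the remainder term vanishes and the coefficients are $y_i$, so $\fsh{T\pmb y}^2=\sum_{i\le k}|y_i|^2=\fsh{\pmb y}^2$.
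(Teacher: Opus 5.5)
This is how the paper obtains the result: the theorem is stated there as a direct translation of Theorem \ref{thm:p.iso} to $\O^n$. Your converse (reading off $\ker T$ by pairing $T\pmb{y}$ with $\pmb{x^j}$, then applying (3)$\Rightarrow$(1) or \eqref{eq:bessel}) is the Remark following that theorem. One small point: the rows of the matrix are strictly the $\pmb{\eta}$-coordinates of $T\pmb{\eta^i}$. The coordinate map is right multiplication by a real orthogonal matrix, which preserves $\fx{\cdot}{\cdot}$ and commutes with left octonion multiplication, so weak associative orthonormality carries over.

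What you should drop is the fallback of deriving that $\ker T$ is an $\O$-submodule from para-linearity. That claim is false, and without the hypothesis the forward direction fails. Take $n=2$ and $T\pmb{y}=\frac{1}{\sqrt2}(y_1+y_2e_1,0)$, i.e.\ the matrix $\frac{1}{\sqrt2}\begin{pmatrix}1&0\\ e_1&0\end{pmatrix}$.

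\begin{itemize}
\item The kernel is $\ker T=\{(-se_1,s):s\in\O\}$. It is not an $\O$-submodule, because $[e_2,e_4,e_1]=2e_7\neq0$, and it meets $\R^2$ only in $0$.
\item Solving $x_1(e_1w)+x_2w=0$ for all $w\in\O$ gives $(\ker T)^\perp=\{(x_1,-x_1e_1):x_1\in\spc_{e_1}\}$.
\item On this set $T(x_1,-x_1e_1)=(\sqrt2\,x_1,0)$, and both sides have norm $\sqrt2\abs{x_1}$, so $T$ is a partial isometry.
\end{itemize}

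Yet no associative (i.e.\ real) orthonormal basis puts $T$ in the stated form. A zero row would need a nonzero real vector in $\ker T$. Taking $k=2$ would make $T$ injective by \eqref{eq:bessel}.

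So the hypothesis is genuinely needed. The paper tacitly inherits it from Theorem \ref{thm:p.iso}, and you should state it explicitly as an assumption, which your main line in effect already does.
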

	
	We denote the set of all para-linear isometric matrices by
	$$  \text{Iso}_\spf(n):=\{T:\spf^n\to \spf^n\mid T \text{ is an $\spf$-para-linear isometry.}\}$$ for $\spf=\R,\spc,\mathbb H,\O$. Note that the para-linearity is just the canonical linearity in associative division algebras. It is clear that $  \text{Iso}_\spf(n)$ is a closed subset of the matrices space $M_\spf(n)$.
	It is worth pointing out that the octonionic para-linearity is not closed under the ordinary composition, while property \eqref{it:T map aonb to wonb} in Theorem \ref{thm:ft equv} is not closed under the ordinary octonionic   matrix multiplication. Hence $\text{Iso}_\spf(n)$ is not a group  under the ordinary composition or regular composition.

	\begin{eg}
		We give some examples of para-linear isometries.
		\begin{enumerate}
			\item $\text{Iso}_\spo(1)\cong \S^7$.
			
			\medskip
			
			\item

			$
			\dfrac{1}{\sqrt{2}}\left(\begin{array}[c]{ccc}
				e_1& e_2\\
				e_2& e_1
			\end{array}\right),\quad
			\dfrac{1}{\sqrt{2}}\left(\begin{array}[c]{ccc}
				e_1& e_2\\
				e_1& -e_2
			\end{array}\right)\in \text{Iso}_\spo(2).$
			
			\medskip
			
			\item $
			\dfrac{1}{\sqrt{6}}\left(\begin{array}[c]{ccc}
				{\sqrt{2}e_4}&	\sqrt{2}e_1&\sqrt{2}e_2\\
				0&	\sqrt{3}e_2& \sqrt{3}e_1\\
				-2e_4&	e_1& e_2	
			\end{array}\right)\in \text{Iso}_\spo(3).$
		\end{enumerate}
		
	\end{eg}
	\begin{thm}\label{thm:iso(2)}
		$\text{Iso}_\spo(2)\cong\bigcup_{J\in \S^6}\S^7\times U_{\spc_J}(2)/ \sim $, where $\sim$ is an equivalence relation defined by
		$$(p,\pmb{T})\sim(q,\pmb{S})\iff p\pmb{T}=q\pmb{S}.$$
		Here $\S^6$ is the set of imaginary units of $\O$  and
		$U_{\spc_J}(2):=U({\spc_J}^{2})$ is the unitary group of ${\spc_J}^{2}$.
		
	\end{thm}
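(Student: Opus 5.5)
By Theorem \ref{thm:ft equv}, $\text{Iso}_\spo(2)$ is the set of $2\times 2$ octonionic matrices $\pmb T$ whose rows $\pmb{x^1}=(a,b)$ and $\pmb{x^2}=(c,d)$ form a weak associative orthonormal basis of $\O^2$. So the plan is to classify such pairs completely.

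The conditions to impose are:
\begin{itemize}
\item[(i)] $|a|^2+|b|^2=|c|^2+|d|^2=1$;
\item[(ii)] $a\overline c+b\overline d=0$;
\item[(iii)] $A_p(\pmb{x^1},\pmb{x^2})=[p,a,\overline c]+[p,b,\overline d]=0$ for all $p\in\O$, using the associator formula for the inner product on $\O^n$ in Example \ref{eg:paralinear}.
\end{itemize}
Maximality of the orthonormal set is automatic in $\O^2$ once the set is weak associative, by the constancy of cardinality of weak associative orthonormal bases.

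\textbf{Easy inclusion.} Given $p\in\S^7$, $J\in\S^6$ and $\pmb S\in U_{\spc_J}(2)$, the rows of $\pmb S$ lie in $\spc_J^2$ and are orthonormal there. All associators among elements of $\spc_J$ and an arbitrary $p$ vanish, since $\spc_J$ is generated by a single element and Artin's theorem applies. Hence the rows of $\pmb S$ form a weak associative orthonormal basis. Theorem \ref{thm:pxi is wonb} then shows that the rows $p\pmb{x^i}$ of $p\pmb S$ also form one. So the map $(p,\pmb S)\mapsto p\pmb S$ lands in $\text{Iso}_\spo(2)$. The relation $\sim$ is exactly the identification of pairs with the same image, so the map is well defined and injective on $\bigcup_J\S^7\times U_{\spc_J}(2)/\sim$.

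\textbf{Surjectivity.} Given $\pmb T\in\text{Iso}_\spo(2)$, I would first normalize by a left unit scalar. I pick a nonzero entry, say $a\neq0$ (otherwise $b\neq 0$ and I swap roles), and set $q=a/|a|$. By Theorem \ref{thm:pxi is wonb}, left multiplication by $\overline q$ preserves weak associative orthonormal bases, so $\overline q\,\pmb T$ again lies in $\text{Iso}_\spo(2)$. Its $(1,1)$ entry is the positive real $|a|$, using alternativity $\overline q(qr)=r$. I then claim that a weak associative orthonormal basis $\{(a,b),(c,d)\}$ with $a\in\R_{>0}$ lies in some $\spc_J^2$. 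From (ii), $\overline c=-a^{-1}b\overline d$. Substituting into (iii), and using that associators are trilinear and vanish when an entry is real, gives
\[
[p,b,\overline d]=0 \quad\text{for all } p\in\O.
\]
The nucleus of $\O$ is $\R$, so $b$ and $\overline d$ generate an associative (quaternionic) subalgebra together with every $p$. This forces $b$ and $d$ to lie in a common $\spc_J$: if $\text{Im}\,b$ and $\text{Im}\,d$ were linearly independent, they would span with $p$ all of $\O$ and the associator would be nonzero for a suitable $p$. Then $\overline c=-a^{-1}b\overline d\in\spc_J$, so every entry lies in $\spc_J$ and the normalized matrix is in $U_{\spc_J}(2)$. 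Writing $\pmb T=q(\overline q\,\pmb T)$, again by alternativity, completes surjectivity.

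\textbf{Main obstacle.} Degenerate cases need care: if some of $b,d$ are real, or if $a=0$, I would repeat the argument with another nonzero entry. If every entry is real, $J$ can be chosen arbitrarily. The key delicate step is the lemma that $[p,b,\overline d]=0$ for all $p$ forces $b,d\in\spc_J$ for a single $J$. I would prove it by taking the imaginary parts $u=\text{Im}\,b$ and $v=\text{Im}\,d$. If these are independent, I choose $p$ imaginary and orthogonal to $1,u,v,uv$; then $[p,u,v]=2(pu)v\neq0$ in the Cayley--Dickson sense, giving the contradiction.
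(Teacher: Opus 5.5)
Your proposal is correct and follows essentially the same route as the paper. The paper normalizes the $(1,1)$ entry to a positive real by a left unit scalar via Theorem \ref{thm:pxi is wonb}, shows the normalized matrix has all entries in one $\spc_J$, and gets the reverse inclusion from Artin's theorem together with Theorem \ref{thm:pxi is wonb}.

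Your version is also more careful than the written proof in three places:
\begin{enumerate}
\item With $a$ real, weak associativity gives $[p,b,\overline d]=0$; the paper writes $[p,c,d]=0$.
\item $c$ must then be recovered from orthogonality, which needs $a\neq 0$.
\item Switching to the entry $b$ when $a=0$ covers matrices such as the one with rows $(0,e_1)$ and $(e_2,0)$. There the paper's deduction $b\in\spc_J$ fails: $e_1\notin\spc_{e_2}$.
\end{enumerate}

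One small correction: your formula $[p,u,v]=2(pu)v$ only holds after replacing $v$ by its component orthogonal to $u$. That replacement does not change the associator, so your conclusion $[p,u,v]\neq0$ survives.
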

	\begin{proof}
		By Theorem \ref{thm:ft equv}, we know that
		$$\text{Iso}_\spo(2)=\left\{\left(
		\begin{array}[c]{ccc}
			a& b\\
			c& d
		\end{array}\right): ((a,b), (c,d)) \text{ is a weak associative orthonormal basis of }  \O^2 \right\}.$$
		Suppose $a\in \R$.	Since $A_p((a,b), (c,d))=0$ for all $p\in \O$, it follows from $a\in \R$ that 
		$$[p,c,d]=0$$  for all $p\in \O$. This implies that there exists an imaginary unit $J$
		such that $c,d\in \spc_J$.
		Using  $(a,b)\perp (c,d)$, we obtain that 
		$$a\overline{c}+b\overline{d}=0.$$
		Since $a\in \R$, $c,d\in \spc_J=\R+\R J$, it follows that $$b\in \spc_J.$$
		Hence $$((a,b), (c,d)) \text{ is an orthonormal basis of } { \mathbb C}_J^2 .$$
		It follows that 
		$$\left(
		\begin{array}[c]{ccc}
			a& b\\
			c& d
		\end{array}\right)\in U_{\spc_J}(2).$$
		
		Suppose $a\notin \R$. In view of Theorem \ref{thm:pxi is wonb}, we get
		$$\frac{\overline{a}}{\abs{a}}\left(
		\begin{array}[c]{ccc}
			a& b\\
			c& d
		\end{array}\right)=:\left(
		\begin{array}[c]{ccc}
			\abs{a}& b'\\
			c'& d'
		\end{array}\right)\in \text{Iso}_\spo(2).$$ 
		By the discussions above, there exists a unit imaginary $J\in \S^6$ such that
		$$\left(
		\begin{array}[c]{ccc}
			\abs{a}& b'\\
			c'& d'
		\end{array}\right)\in U_{\spc_J}(2).$$
		Hence 
		$$\left(
		\begin{array}[c]{ccc}
			a& b\\
			c& d
		\end{array}\right)=\frac{a}{\abs{a}}\left(
		\begin{array}[c]{ccc}
			\abs{a}& b'\\
			c'& d'
		\end{array}\right).$$
		
		Conversely, for any $p\in \S^7, J\in \S^6$ and $\left(
		\begin{array}[c]{ccc}
			a& b\\
			c& d
		\end{array}\right)\in U_{\spc_J}(2)$, we shall show that 
		$$p\left(
		\begin{array}[c]{ccc}
			a& b\\
			c& d
		\end{array}\right)=\left(
		\begin{array}[c]{ccc}
			pa& pb\\
			pc& pd
		\end{array}\right)\in \text{Iso}_\spo(2).$$
		It is easy to check that $((a,b),(c,d))$ is a weak associative orthonormal basis since the sub-algebra of $\O$ generated by two elements is associative.
		It follows from Theorem \ref{thm:pxi is wonb} that 
		$$p\left(
		\begin{array}[c]{ccc}
			a& b\\
			c& d
		\end{array}\right)\in \text{Iso}_\spo(2).$$
		We define
		\begin{eqnarray*}
			\phi:\text{Iso}_\spo(2)&\to& \bigcup_{J\in \S^6}\S^7\times U_{\spc_J}(2)/ \sim\\
			\left(
			\begin{array}[c]{ccc}
				a& b\\
				c& d
			\end{array}\right)&\mapsto&\begin{cases}
				\left[1,\left(
				\begin{array}[c]{ccc}
					a& b\\
					c& d
				\end{array}\right)\right]& a\in \R,\\
				\left[\frac{a}{\abs{a}},\left(
				\begin{array}[c]{ccc}
					\abs{a}& \frac{\overline{a}}{\abs{a}}b\\
					\frac{\overline{a}}{\abs{a}}c& \frac{\overline{a}}{\abs{a}}d
				\end{array}\right)\right]&a\notin \R.
			\end{cases}
		\end{eqnarray*}
		Here $[p,\pmb{T}] $ denotes the  equivalence class  of $(p,\pmb{T})$ in $\S^7\times U_{\spc_J}(2)/ \sim$. One can check this is a bijection.
	\end{proof}
	
	\section{Note on the James question and octonionic Stiefel spaces}
	
	In this section, we give some remarks on octonionic Stiefel spaces and James questions. 
	
	In 1958, I. M. James \cite{james1958stiefel} defined the octonionic Stiefel space $V_k(\O^n)$ as  the space of orthonormal $k$-frames in $\O^n$, i.e.,
	\begin{eqnarray}\label{eqdef:James}
		V_k(\O^n):=\{(x_1,\dots, x_k)\mid x_i\in \O^n,\fx{x_i}{x_j}_{\O}=\delta_{ij}, 1\leqslant i,j\leqslant k\}.
	\end{eqnarray}
	James gave a preliminary study on $V_k(\O^n)$ for $k=2$ and provided two questions on general octonionic Stiefel space:
	\begin{enumerate}
		\item When is the canonical
		projection
		{$$\pi: V_k(\O^n)\to V_l(\O^n)  \qquad   (1\leqslant l< k\leqslant n)$$}
		a fiber map?
		\item  Is $	V_k(\O^n)$ a manifold?
	\end{enumerate}

	%
	
	However,   in the sense of James given by \eqref{eqdef:James}, there may exist the space $V_3(\O^2)$ and  $V_4(\O^2)$, examples can be found in \cite{huoqinghai2021tensor,qianchao2022}. We	
	think it is natural to define the \textbf{octonionic Stiefel spaces} as
	\begin{eqnarray*}
		V^{\text{w}}_k(\O^n)
		:=\{(x_1,\dots, x_k)\mid x_i\in \O^n,(x_1,\dots, x_k) \text{ is a weak associative orthonormal set}\}.
	\end{eqnarray*}
	Since the number of a weak associative orthonormal basis of $\O^n$ must be $n$, it follows that the space $	V^{\text{w}}_k(\O^n)$  does not exist for $k>n$. 	The \textbf{James Questions} are canonically modified as
	\begin{enumerate}
		\item\label{it:ques1} When is the canonical
		projection
		$$\pi: V^{\text{w}}_k(\O^n)\to V^{\text{w}}_l(\O^n), (1\leqslant l< k\leqslant n)$$
		a fiber map?
		\item \label{it:ques2} Is $	V^{\text{w}}_k(\O^n)$ a manifold?
	\end{enumerate}

	Clearly, $V^{\text{w}}_1(\O^n)$ is the sphere $\S^{8n-1}$.
	However, $V^{\text{w}}_2(\O^n)$ is somewhat complicated. It turns out that the canonical projection
	$$\pi:V^{\text{w}}_2(\O^n)\to V^{\text{w}}_1(\O^n)$$ is not a fiber map for $n>1$.
	This is because,  for each $y\in V^{\text{w}}_1(\O^n)$, $\pi ^{-1}(y)$ depends on the real dimension of the real subspace $$\O(\O y):=\left\{\sum_{i=1}^n p_i(r_iy): \ p_i,r_i\in \O, 1\leqslant i\leqslant n, n\in \mathbb{N} \right \},$$
	which is not   constant. Indeed, if $x\in \pi ^{-1}(y)$, then $(x,y)$ is a weak associative orthonormal set. Hence  $\fx{x}{py}=0$ for all $p\in \O$. This means $x\in (\O y)^{\perp}$. It follows from identity \eqref{eq:<>=sum <>R}  that $$(\O y)^{\perp}=(\O(\O y))^{\perp_\R}.$$ Hence $\pi ^{-1}(y)$ is the unitization of $(\O(\O y))^{\perp_\R}$.  If $y\in \R^n$, then $(\O(\O y))=\O y$ and thus $\dim_\R(\O(\O y))=8$. If $y=\dfrac{1}{\sqrt{2}}(1,e_1,0,\dots,0)$ , then
	$$(e_2e_4)y-e_2(e_4y)=\dfrac{1}{\sqrt{2}}(0,2e_7,0,\dots,0)\in (\O(\O y)).$$ Thus there are 8 real linear-independent vector 
	$$y,e_1y,\dots, e_7y,\dfrac{1}{\sqrt{2}}(0,2e_7,0,\dots,0)\in (\O(\O y)).$$ Thus $\dim_\R(\O(\O y))\geqslant 9$. This shows that the projection $\pi$ can not be a fiber map.
	
	\bigskip
	
	James also asked that whether $V_n(\O^n)$ has any properties of a group-space. As demonstrated by Theorem \ref{thm:iso(2)},  we know that $	V^{\text{w}}_2(\O^2)=\text{Iso}_\spo(2)$ is not a group. And in some sense it has a ``book structure",  the spine of which is $\S^7\times O(2)/\sim$.  We remark that $\S^7\times O(2)/\sim$ can be endowed with a Moufang loop structure via
	$$[p,\pmb{T}][q,\pmb{S}]:=[pq,\pmb{TS}],
	$$	
	where $[p,\pmb{T}],[q,\pmb{S}]\in \S^7\times O(2)/\sim$. The loop $\S^7\times O(2)/\sim$ acts on  $	V^{\text{w}}_2(\O^2)$ naturally in a ``alternative" manner, which means the action satisfies:
	$$a(ax)=a^2x, (ax)a=a(xa), x(a^2)=(xa)a$$
	for all $a\in \S^7\times O(2)/\sim$ and $x\in 	V^{\text{w}}_2(\O^2)$. Every page of the book  $	V^{\text{w}}_2(\O^2)$ is $\S^7\times U_{\spc_J}(2)/ \sim$, which also posses a canonical Moufang loop structure.
	
	We think it maybe helpful to study  $V^{\text{w}}_k(\O^n) $ from the viewpoint of loop structure. The development of the theory of octonionic Stiefel space may also contribute to the $G_2$ theory. 
	\bigskip

	\textbf{Acknowledgements} 
	
	The authors are greatly indebted to the anonymous referee for his/her valuable
	comments which improve significantly the presentation of the paper.

	\bibliographystyle{plain}
	
	\bibliography{hilbanach}
\end{document}